\newtheorem{dfn}{Definition}[section]
\newtheorem{theo}[dfn]{Theorem}
\newtheorem{prop}[dfn]{Proposition}
\newtheorem{cor}[dfn]{Corollary}
\newtheorem{rem}[dfn]{Remark}
\newtheorem{lem}[dfn]{Lemma}
\newtheorem{question}[dfn]{Question}
\newtheorem{obs}[dfn]{Observation}
\newenvironment{proof}{
\par
\noindent {\bf Proof.}\rm}{\mbox{}\hfill$\square$\par\vskip 3mm}
\newcommand{\R}{\mathcal{R}}
\renewcommand{\L}{\mathcal{L}}
\newcommand{\N}{\mathcal{N}}
\renewcommand{\P}{\mathcal{P}}
\newcommand{\delete}[1]{}
\newcommand{\Rset}[1]{{#1}^{\boldsymbol{R}}}
\newcommand{\Lset}[1]{{#1}^{\boldsymbol{L}}}
\newcommand{\GR}{\Rset{G}}
\newcommand{\GL}{\Lset{G}}
\newcommand{\HR}{\Rset{H}}
\newcommand{\HL}{\Lset{H}}
\newcommand{\UU}{\mathcal{U}}
\newcommand{\BB}{\mathcal{B}}
\newcommand{\DD}{\mathcal{D}}
\newcommand{\DDI}{{\mathcal{D}_I}}
\newcommand{\II}{\mathcal{I}}
\newcommand{\NN}{\mathbb{N}}
\newcommand{\ab}{{\merge{\DD}{\BB}}}
\newcommand{\birth}{{\rm b}}
\newcommand\merge[2]{%
  \ooalign{\hfil$\vcenter{\hbox{$#1$}}$\hfil\cr
    \hfil$\vcenter{\hbox{$\scriptstyle #2$}}$\hfil}}
\title{Binary dicots, a core of dicot games}
\author{Gabriel Renault${}^{1,2}${}\footnote{email: gabriel.renault@labri.fr}
\vspace{.3cm} \\
${}^{1}$ Univ. Bordeaux, LaBRI, UMR5800, F-33400 Talence\\
${}^{2}$ CNRS, LaBRI, UMR5800, F-33400 Talence
}
\author{Gabriel Renault{}\footnote{email: gabriel.renault@labri.fr}
\vspace{.3cm} \\
Univ. Bordeaux, LaBRI, UMR5800, F-33400 Talence\\
CNRS, LaBRI, UMR5800, F-33400 Talence \\
Department of Mathematics, Beijing Jiaotong University, Beijing 100044 P. R. China
}
\begin{document}

\maketitle

\begin{sloppypar}

\begin{abstract}
We study combinatorial games under mis\`ere convention.
Several sets of games have been considered earlier to better understand the behaviour of mis\`ere games.
We here connect several of these sets.
In particular, we prove that comparison modulo binary dicot games is often the same as comparison modulo dicot games, and that equivalence modulo dicot games and modulo impartial games are the same when they are restricted to impartial games.
\end{abstract}

\section{Introduction}
\label{sec:intro}

In this paper, we study combinatorial games under mis\`ere convention, and focus our analyze on some special families of games, namely dicot games, binary games, impartial games and their intersections.
We first recall basic definitions, following~\cite{lip, ww, onag, cgt}.

A combinatorial game is a finite two-player game with no chance and perfect information.
The players, called Left and Right\footnote{By convention, Left is a female player whereas Right is a male player.}, alternate moves until one player is unable to move.
The last player to move wins the game in its normal version, while that player would lose the game in its mis\`ere version.
In this paper, we are mostly considering games in their mis\`ere version.

A game can be defined recursively by its sets of options $G = \{\GL|\GR\}$, where $\GL$ is the set of games Left can reach in one move (called Left options), and $\GR$ the set of games Right can reach in one move (called Right options).
The typical Left option of $G$ is denoted $G^L$, and the typical Right option of $G$ is denoted $G^R$.
A follower of a game $G$ is a game that can be reached from $G$ after a succession of (not necessarily alternating) Left and Right moves.
Note that a game $G$ is considered one of its own followers.
The zero game $0=\{\cdot|\cdot\}$, is the game with no options (a dot indicates an empty set of options).
A Left end (resp. Right end) is a game where Left (resp. Right) cannot move.
The birthday $\birth(G)$ of a game $G$ is defined recursively as one plus the maximum birthday of the options of $G$, with $0$ being the only game with birthday $0$.
For example, the game $*=\{0|0\}$ has birthday $1$.

The (disjunctive) sum $G + H$ of two games $G$ and $H$ is defined recursively as \mbox{$G+H = \{\GL+H,G+\HL|\GR+H,G+\HR\}$}, where $\GL+H$ is understood to range over all sums of $H$ with an element of $\GL$, that is the game where each player can on their turn play a legal move for them in one (but not both) of the components.
The conjugate $\overline{G}$ of a game $G$ is recursively defined as $\overline{G} = \{\overline{\GR}|\overline{\GL}\}$, where again $\overline{\GR}$ is understood to range over all conjugates of elements of $\GR$, that is the game where Left's and Right's roles are reversed.

Under both conventions, we can sort all games into four sets, depending on their outcomes.
When Left has a winning strategy on a game $G$ no matter which player starts, we say $G$ has outcome $\L$, and $G$ is an $\L$-position.
Similarly, $\N$, $\P$ and $\R$ (for Next, Previous and Right) denote respectively the outcomes of games on which the first player, the second player and Right has a winning strategy whoever starts the game.
The mis\`ere outcome of a game $G$ is denoted $o^-(G)$, while its normal outcome is denoted $o^+(G)$.
Outcomes are partially ordered according to Figure~\ref{fig:order}, with Left prefering greater games.

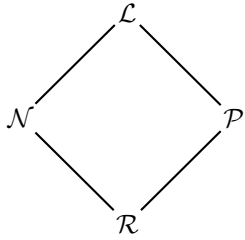
\begin{figure}
\begin{center}

\begin{tikzpicture}
[thick,scale=1,
     vertex/.style={circle,draw=white!100,inner sep=1pt,minimum
size=2mm,fill=white!100},
     blackvertex/.style={circle,draw,inner sep=0pt,minimum
size=1.5mm,fill=black!100},
     clause/.style={circle,draw,inner sep=0pt,minimum
size=3mm,fill=white!100}]


\coordinate (L) at (0,2.8);
\coordinate (N) at (-1.4,1.4);
\coordinate (P) at (1.4,1.4);
\coordinate (R) at (0,0);

\draw (L)--(N)--(R)--(P)--(L);

\draw (L) node[vertex] {$\L$};
\draw (N) node[vertex] {$\N$};
\draw (R) node[vertex] {$\R$};
\draw (P) node[vertex] {$\P$};

\end{tikzpicture}
\vspace{-0.7cm}
\end{center}
\caption{Partial ordering of outcomes}\label{fig:order}
\end{figure}

A game $G$ is said to be greater than or equal to a game $H$ in mis\`ere play whenever Left always prefer having the game $G$ to the game $H$ in a sum, that is $G \geqslant^- H$ if we have \mbox{$o^-(G+X) \geqslant o^-(H+X)$} for every game $X$.
A game $G$ is said to be equivalent to a game $H$ in mis\`ere play, denoted $G \equiv^- H$, if we have both $G \geqslant^- H$ and $H \geqslant^- G$.
Two games $G$ and $H$ are said to be incomparable if we have neither $G \geqslant^- H$ nor $H \geqslant^- G$.
Comparability, equivalence and incomparability are defined similarly in normal play, using superscript $+$ rather than $-$.

Mesdal and Ottaway~\cite{partizan}, and Siegel~\cite{canonical} gave evidence that equivalence and even comparability are very limited in general mis\`ere play.
This is why Plambeck and Siegel defined in~\cite{quotient1,quotient2} equivalence modulo restricted sets of games, leading to a breakthrough in the study of mis\`ere play games.

\begin{dfn}[\cite{quotient1,quotient2}]
{\rm
Let $\UU$ be a set of games, $G$ and $H$ two games (not necessarily in $\UU$). 
We say $G$ is greater than or equal to $H$ modulo $\UU$ in mis\`ere play and write 
$G \geqslant^- H \pmod{\UU}$ 
if $o^-(G+X) \geqslant o^-(H+X)$ for every $X \in \UU$. 
We say $G$ is equivalent to $H$ modulo $\UU$ in mis\`ere play and write $G \equiv^- H \pmod{\mathcal{U}}$ if $G \geqslant^- H \pmod{\mathcal{U}}$ and $H \geqslant^- G \pmod{\mathcal{U}}$.
}
\end{dfn}

For instance, Plambeck and Siegel~\cite{quotient1,quotient2} considered the sets of all positions of given games, octal games in particular.
Other sets have been considered, including the sets of alternating games $\mathcal{A}$~\cite{alternating}, impartial games $\mathcal{I}$~\cite{ww,onag}, dicot games $\mathcal{D}$~\cite{mpg,dicotaxo,sprigs}, dead-ending games $\mathcal{E}$~\cite{pkayles,deadending}, and all games $\mathcal{G}$~\cite{partizan,canonical}.

We believe that having some properties, namely being closed under followers, addition and conjugates, makes a set more relevant to be studied.
We hence define a universe to be a set closed under followers, addition and conjugates.
When a set $\UU$ is not a universe, it is natural to consider the closure $c\ell(\UU)$ of $\UU$, that is the smallest set containing $\UU$ that is closed under addition and followers.
Note that $c\ell(\UU)$ might still not be closed under conjugates.

To simplify notations, we use $\geqslant^-_\UU$ and $\equiv^-_\UU$ to denote superiority and equivalence between games modulo a set $\UU$.
The symbol $=$ between games is here reserved to denote recursive equality on the sets of options.
Observe also that when $\UU$ and $\UU'$ are two sets such that $\UU' \subseteq \UU$, then we have $G\geqslant^-_{\UU'} H$ whenever we have $G \geqslant^-_\UU H$.

In the following, we study several sets of games, namely dicot games, binary games, impartial games and their intersections.
A game is said to be dicot if it is $\{\cdot|\cdot\}$ or it has both Left and Right options and all these options are dicot.
A game is said to be binary if it has at most one Left option and at most one Right option, and all these options, if any, are binary.
A game is said to be impartial if its Left options and its Right options are the same, and all these options, if any, are impartial.
Throughout this paper, the universe of dicot games is denoted $\DD$, the set of binary dicot games is denoted $\ab$, and the universe of impartial games is denoted $\II$.
Note that binary games, and binary dicot games are not closed under addition.

Binary dicot games were introduced and studied by Allen in~\cite{mpg,pmq}.
In particular, she proved the invertibility of an infinite family of binary dicot games modulo dicot games.

In the following, outcomes of games (or sums of games) with small birthday are often given without proof, but can be checked by hand.
When considering an impartial game $G$, as $\GL = \GR$, we note $G'$ a typical option of $G$.
Observe that for any game $G$ with outcome $\P$, $G+*$ has outcome $\N$.
This is used without reference throughout this paper.
It is also worth noticing that many results in this paper are due to the fact that the only end in the sets of games considered is $0$, that is they all have the same subset of ends.

The paper is organized as follows.
In Section~\ref{sec:out}, we consider tractability to mis\`ere convention of some normal play games properties, giving more evidence that mis\`ere play is in general harder than normal play.
In Section~\ref{sec:bin}, we consider comparison modulo $\ab$, and prove that in infinitely many (non-trivial) cases, it is the same as comparison modulo $\DD$.
Finally, in Section~\ref{sec:imp}, we look at impartial games modulo $\DD$, and prove that comparison modulo $\II$ is the same as comparison modulo $\DD$ when restricted to this particular case.

\section{Comparison between normal and mis\`ere play}
\label{sec:out}

In normal play, dicot games are called all-small, because they are infinitesimal, that is for any positive number $a$ and dicot game $x$, we have $-a \leqslant^+ x \leqslant^+ a$, which also implies $o^+(a+x) = \L$.
In mis\`ere play, this is no longer the case.
In particular, any pair made of a negative number and a positive number is incomparable~\cite{deadending}, which prevents any game to be in the interval between them.
Nevertheless, it is still natural to ask if there is a game $G$ such that for any dicot game $X$, we have $o^-(G+X) = \L$.
Siegel~\cite{canonical} gave all the tools to answer this question by defining the adjoint of a game and giving some of its properties.

\begin{dfn}[Siegel \cite{canonical}]
{\rm
The adjoint of $G$, denoted $G^o$, is given by
$$G^o = 
\begin{cases}
* & \text{ if } G = 0\,, \\
\{(\GR)^o|0\} & \text{ if } G \neq 0 \text{ and } G \text{ is a Left end,} \\
\{0|(\GL)^o\} & \text{ if } G \neq 0 \text{ and } G \text{ is a Right end,} \\
\{(\GR)^o|(\GL)^o\} & \text{ otherwise.}
\end{cases}
$$
where $(\GR)^o$ denotes the set of adjoints of elements of $\GR$.
}
\end{dfn}

\begin{rem}[Dorbec et al. \cite{dicotaxo}]
The adjoint of any game is a dicot game.
\end{rem}

\begin{prop}[Siegel \cite{canonical}]
\label{prop:adjointsum}
For any game $G$, $G+G^o$ is a mis\`ere $\P$-position.
\end{prop}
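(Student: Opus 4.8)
The plan is to prove the operational statement that the \emph{second} player wins $G+G^o$ no matter who moves first, which is exactly what it means for $G+G^o$ to be a $\P$-position. I would argue by induction on $\birth(G)$, exhibiting an explicit pairing (``mirroring'') strategy for the second player. The base case is $G=0$, where $G+G^o=0+{*}=*$; one checks directly that $*$ is a misère $\P$-position, since whoever moves first is forced to $0$, leaving the opponent unable to move and hence (under misère convention) winning. For the inductive step the driving structural observation is that, by the definition of the adjoint, the Left options of $G^o$ are precisely the $(\GR)^o$ and its Right options are precisely the $(\GL)^o$, with exactly one extra option equal to $0$ appearing in the Left-end and Right-end cases.

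The mirroring strategy is then as follows. If the opening move changes the $G$-component into one of its options $G'$, producing $G'+G^o$, the responder answers in the $G^o$-component by moving to $(G')^o$; conversely, if the opening move is played in the $G^o$-component to some adjoint $(G')^o$, then, since every such option of $G^o$ is by construction the adjoint of a corresponding option $G'$ of $G$ available to the other player, the responder moves $G\to G'$ in the $G$-component. In both cases the position returns to the form $G'+(G')^o$ with $\birth(G')<\birth(G)$, which is a $\P$-position by the induction hypothesis. Since it is now the original opener's turn at a $\P$-position, the opener is the one to move and therefore loses.

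It remains to treat the moves that escape this pairing, namely the opener's use of the extra $0$-option that arises when $G\neq 0$ is a Left end or a Right end; these are the only options of $G^o$ that are not adjoints of options of $G$. Take $G\neq 0$ a Left end, so $\GL=\emptyset$, $\GR\neq\emptyset$, and $G^o=\{(\GR)^o\mid 0\}$. Every Left opening move must lie in the $G^o$-component and is some $(G^R)^o$, which is mirrored by $G\to G^R$ as above; every Right opening move either lies in $G$ (mirrored by the corresponding move in $G^o$) or is the move $G^o\to 0$, reaching $G+0=G$ with Left to play. Here the design of the adjoint pays off: $G$ is a Left end, so Left cannot move and thus wins in misère, i.e.\ the responder wins. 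The case of a nonzero Right end is entirely symmetric, obtained by interchanging the roles of Left and Right. Hence in every line the player who did not open has a winning reply, so $G+G^o$ is a $\P$-position.

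The main obstacle, and the only genuinely delicate point, is exactly the verification of these end cases: the extra $0$-options have no mirror image, and one must check that playing such a move always hands the opponent an end position of the appropriate side, which under misère convention is a win for that opponent. This is precisely why the adjoint is defined with those extra $0$ options, and making this checking airtight (ensuring the relevant $G^R$ or $G^L$ exists, and that the four cases of the adjoint are mutually exclusive and exhaustive) is where I would spend the most care.
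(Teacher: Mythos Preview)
Your proof is correct and is essentially the standard mirroring argument for this result. Note, however, that the paper does not supply its own proof of this proposition: it is quoted as a known result of Siegel~\cite{canonical} and used as a black box, so there is no in-paper argument to compare against. Your write-up matches the expected proof (induction on birthday, pair each move in one summand with the corresponding adjoint move in the other, and handle the extra $0$-options at ends via the mis\`ere convention), and the end-case analysis you flag as the delicate point is handled correctly.
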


Thus, for any game $G$, we can find a dicot game, namely $G^o$, such that $o^-(G+G^o) = \P$.
From this, we can naturally find for any game $G$ some dicot games to sum $G$ with and match any outcome.

\begin{cor}
For any game $G$, we have:
\begin{enumerate}[(i)]
\item $o^-(G+G^o) = \P$,
\item $o^-(G+\{G^o|G^o\}) = \N$,
\item $o^-(G+\{G^o,(\GR)^o|(\GL)^o\}) = \L$,
\item $o^-(G+\{(\GR)^o|G^o,(\GL)^o\}) = \R$.
\end{enumerate}
where $(\GL)^o$ (resp. $(\GR)^o$) is replaced by $0$ when $G$ is a Left (resp. Right) end.
\end{cor}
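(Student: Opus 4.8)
The plan is to reduce all four parts to Proposition~\ref{prop:adjointsum} through a single observation: in misère play a position of outcome $\P$ is a win for the \emph{previous} player, so \emph{moving into a $\P$-position is a winning move} for whoever makes it (the mover's opponent is left to move in a second-player win). Since Proposition~\ref{prop:adjointsum} tells us that $G + G^o$ is such a $\P$-position, every computation below amounts to checking that the intended player can always steer the second component so that the two halves of the sum become mutually adjoint. Part (i) is nothing but a restatement of Proposition~\ref{prop:adjointsum}.

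For part (ii), set $H = \{G^o \mid G^o\}$. Both Left and Right have the move $H \to G^o$, so in $G + H$ either player may, on their turn, reach $G + G^o$; by the observation this is a winning move, hence the first player wins and $o^-(G + H) = \N$.

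For part (iii), let $K = \{G^o, (\GR)^o \mid (\GL)^o\}$ with the stated end substitutions. I would show Left wins both moving first and moving second. Moving first, she plays $K \to G^o$ to reach the $\P$-position $G + G^o$ and wins. If Right moves first, he either plays inside $G$, reaching $G^R + K$, or inside $K$, reaching $G + (G^L)^o$ for some Left option $G^L$ (or $G$ itself when $G$ is a Left end). In the first case Left answers $K \to (G^R)^o$, which is legal because $(G^R)^o \in (\GR)^o$ (note $G$ is then not a Right end, so this set has not collapsed), landing in the $\P$-position $G^R + (G^R)^o$. In the second case Left answers $G \to G^L$, reaching the $\P$-position $G^L + (G^L)^o$; and if $G$ is a Left end, so that Right's only move in $K$ is $K \to 0$, then in $G + 0$ Left has no move and therefore wins under the misère convention. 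Thus Left wins whoever starts, so $o^-(G + K) = \L$.

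Part (iv) is the mirror image of part (iii), and I would obtain it by running the same argument with the roles of Left and Right, and of $(\GL)^o$ and $(\GR)^o$, interchanged (equivalently, by conjugating the sum in (iii), which reverses the outcome from $\L$ to $\R$). The engine here — that a move into a $\P$-position wins — is immediate once Proposition~\ref{prop:adjointsum} is available; the only genuinely fiddly point, and the one I expect to demand the most care, is the bookkeeping at ends: one must verify that the adjoint option the responding player wishes to use is actually present in the added component, and that whenever such an option has been collapsed to $0$ the resulting "no legal move" situation falls to the intended player's advantage.
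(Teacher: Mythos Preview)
Your proof is correct and is exactly the argument the paper leaves implicit: the corollary is stated without proof, as an immediate consequence of Proposition~\ref{prop:adjointsum}, and the natural justification is precisely the one you give --- the first (respectively, intended) player steers the added component so that the two summands become mutual adjoints, with the end cases handled by the substitution conventions. Your bookkeeping at ends is accurate and nothing is missing.
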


The natural following question is whether we can look at a smaller set $\UU \subset \DD$ to find a game $G$ such that for any game $X$ in $\UU$, we have $o^-(G+X) = \L$.
Among such sets, we answer that problem for the universe of impartial games $\II$ and the set of binary dicot games $\ab$.


We first look at impartial games and define the game $I = \{*|\{*|0\}\}$.
Note that $I$ is a binary dicot game.

\begin{theo}
\label{th:IL}
For any impartial game $X$, we have $o^-(I+X) = \L$.
\end{theo}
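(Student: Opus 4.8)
The plan is to prove the statement by strong induction on $\birth(X)$, establishing at the same time a companion claim about the unique Right option $J=\{*|0\}$ of $I$. One first checks the base outcomes $o^-(I)=\L$ and $o^-(J)=\L$ directly: since $*$ is a $\P$-position, from either $I$ or $J$ Left moves to $*$ and wins, while Right's only moves lead to $J$ (an $\L$-position) from $I$, or to $0$ (an $\N$-position) from $J$, both of which Left wins with the move. In a sum $I+X$ with $X$ impartial, Left's options are $*+X$ and the games $I+X'$, while Right's options are $J+X$ and the games $I+X'$, where $X'$ ranges over the options of $X$ (the same for both players). To get $o^-(I+X)=\L$ I must show that Left wins $I+X$ playing first and playing second. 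Playing first, Left moves to some $I+X'$, an $\L$-position by induction (when $X=0$ this is just the base case $o^-(I)=\L$). Playing second, Right's moves to the various $I+X'$ are again answered by induction, so the only delicate line is Right's move $I\to J$, which reaches $J+X$ with Left to move.

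Everything thus reduces to the companion claim, which I prove in the sharp form: for every impartial $X$, one has $o^-(J+X)=\L$ if $X$ is an $\N$-position and $o^-(J+X)=\N$ if $X$ is a $\P$-position. (Recall an impartial game is always an $\N$- or $\P$-position.) The Right options of $J+X$ are $X$ itself and the games $J+X'$; by induction each $J+X'$ is won by Left moving first, so Right's only hope is $J\to 0$, reaching $X$ with Left to move, which Left loses precisely when $X$ is a $\P$-position. This settles the second-player half. For the first-player half, Left's options in $J+X$ are $*+X$ and the games $J+X'$: if $*+X$ is a $\P$-position, Left wins at once by playing $J\to *$; otherwise, since induction makes $J+X'$ an $\L$-position whenever $X'$ is an $\N$-position, it suffices to exhibit one $\N$-option of $X$.

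The crux is therefore a purely impartial lemma: if $*+X$ is an $\N$-position, then $X$ has an $\N$-option. I will prove the contrapositive, namely that if every option of $X$ is a $\P$-position (in particular if $X=0$), then $*+X$ is a $\P$-position. For $X=0$ this is just $*+0=*$; otherwise $X$ is itself an $\N$-position, having a move to a $\P$-position, and the options of $*+X$ are $X$ together with the games $*+X'$, each of which is an $\N$-position because $X'$ is a $\P$-position and adding $*$ to a $\P$-position yields an $\N$-position. Hence every option of $*+X$ is an $\N$-position and $*+X$ is a $\P$-position. This last step, where the observation that a $\P$-position summed with $*$ becomes an $\N$-position is exactly what excludes the bad case, is the main obstacle; once it is secured the two inductions close with only routine outcome bookkeeping.
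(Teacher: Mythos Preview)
Your proof is correct. The underlying mechanism is the same as the paper's --- at each stage either $X$ has an $\N$-option for Left to move to, or every option of $X$ is a $\P$-position and Left switches the $I$-component to $*$ --- but you package it differently. The paper describes Left's winning strategy directly in a single pass: keep playing in $X$ to $\N$-positions; when none remain, move the $I$-component (currently $I$ or $J$) to $*$ and finish with a short case analysis. You instead run a formal induction on $\birth(X)$, isolate the behaviour of the Right option $J=\{*\mid 0\}$ as a separate sharp claim ($o^-(J+X)$ is $\L$ or $\N$ according as $X$ is $\N$ or $\P$), and extract the dichotomy as an explicit ``crux lemma'' (if every option of $X$ is a $\P$-position then $*+X$ is a $\P$-position). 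Your version is more modular and yields the additional information about $J+X$ for free; the paper's version is shorter and keeps the actual winning strategy in view throughout.
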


\begin{proof}
Let $X$ be an impartial game.
We give a winning strategy for Left on $I + X$:
as long as Right has not played two moves in the $I$ component and there is a move in the $X$ component leaving that component as an $\N$-position, Left plays such a move.
If Right plays two moves in the $I$ component, we know that Left played the last move in the $X$ component, leaving it as an $\N$-position $Y$.
Hence the resulting game is $0+Y$, which Left wins playing first a priori.
Otherwise, at some point there is no move in the $X$ component leaving that component as an $\N$-position, and Left moves the $I$ component to $*$.
If Right moves $*$ to $0$, either there is no move in the $X$ component and Left wins immediately, or she plays any move in that component, resulting in a $\P$-position, which she wins playing second.
If Right moves in the $X$ component, he leaves that component as a $\P$-position $Y$, and Left can move $*$ to $0$.
Hence the resulting game is $0+Y$, which Left wins playing second a priori.
\end{proof}

It might seem surprising that such a simple game, binary dicot with birthday $3$, might overtake any impartial game, but we remind the reader that for any dicot game $X$, we have $o^+(1+X) = \L$, and $1$ seems way simpler than $I$ in our opinion, and dicot games have a richer structure than impartial games.


We now look at binary dicot games and define a family $(B_i)_{i \in \NN}$ as follows:
\begin{itemize}
\item $B_0 = \{0|*\}$,
\item $B_{i+1} = \big\{\{\{0|B_i\}|\{0|B_i\}\}\big|\{0|B_i\}\big\}$.
\end{itemize}
Observe that we can recursively verify that $B_i$ is binary dicot for any $i$.

This family serves here as a counterexample to our prior interrogation, as shown in the following theorem.

\begin{theo}
\label{th:GGiR}
For any game $G$ and any $i$ with $i \geqslant \birth(G)$, we have $o^-(G + B_i) = \R$.
\end{theo}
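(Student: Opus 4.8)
The plan is to exhibit a winning strategy for Right, proving simultaneously a strengthened statement that also controls the two auxiliary positions occurring in the recursion, namely $C_i=\{0|B_i\}$ and $A_i=\{C_i|C_i\}$, so that $B_{i+1}=\{A_i|C_i\}$. Writing $W(P)$ for ``Right wins $P$ with Left to move'' and $W'(P)$ for ``Right wins $P$ with Right to move'' (so $o^-(P)=\R$ means $W(P)$ and $W'(P)$ both hold), I would prove by induction on the birthday of the whole position the following six claims: for every $G$, both $W(G+B_j)$ and $W'(G+B_j)$ whenever $\birth(G)\le j$; $W'(G+C_j)$ whenever $\birth(G)\le j$, and $W'(G+A_j)$ whenever $\birth(G)\le j+1$; $W(G+A_j)$ whenever $\birth(G)\le j$; and finally $W(G+C_j)$ whenever $\birth(G)\le j+1$ \emph{and $G$ is a Right end}. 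The theorem is then the $B$-case applied with $j=i$ (note the bound is $\birth(G)\le j$, so it covers every $i\ge\birth(G)$).

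First I would settle the base cases, which reduce to the easily checked outcomes $o^-(B_0)=\R$, $o^-(*)=\P$ and $o^-(0)=\N$. For the inductive step I would analyse each claim by cases on who moves and on Left's reply. The decisive point is how Right chooses his move: when Right is to move he reduces $G$ along one of its Right options whenever $G$ has one (this leaves the component untouched while the birthday of the $G$-part strictly drops, increasing his slack), and only when $G$ is a Right end does he touch the component, descending one rung by $B_j\to C_{j-1}$ or $A_j\to C_j$. When Left is to move, every Left reply either reduces $G$ (answered by the induction at a strictly smaller position, via the $W'$-claims) or moves inside the component to the next rung, which Right answers by continuing down the ladder. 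Since each such round drops the index of the component by one while the ladder $B_i\rightsquigarrow B_{i-1}\rightsquigarrow\cdots$ has length $i\ge\birth(G)$, the $G$-part is exhausted before the component, so Left is forced to make the last move. All recursive appeals are to strict followers, so the induction on the birthday of $G$ plus component is well founded.

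The main obstacle is the collapsing move $C_j\to 0$, which Left can always play to annihilate the reservoir and hand Right the move in the bare game $G$. Since $W'(G)$ fails in general (already for $G=*$, where $o^-(*)=\P$), the naive claim $W(G+C_j)$ for arbitrary $G$ is simply \emph{false}; this is exactly what forces the Right-end restriction on the $C$-claim with Left to move. The resolution is that this collapse only ever has to be answered when $G$ is a Right end, for then after $C_j\to 0$ the position is the Right end $G$ with Right to move, so Right cannot move and wins by the misère rule. Making this precise amounts to checking that $W(G+C_j)$ is invoked only from the Right-end branches of $W'(G+B_{j+1})$ and $W'(G+A_j)$, where $G$ is indeed a Right end, and that the other branch of the $C$-claim (Left moving in $G$ to $G^L$) is covered by $W'(G^L+C_j)$ with $\birth(G^L)\le j$. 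Verifying this routing of the claims into one another, together with keeping the $\pm1$ discrepancies in the birthday bounds consistent across all six statements, is the delicate bookkeeping at the heart of the argument; once it is in place each individual case is a short outcome computation.
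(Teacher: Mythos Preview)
Your proposal is correct and follows essentially the same strategy as the paper: Right exhausts his moves in $G$ before touching the $B$-component, and the obstacle you single out (the collapse $C_j\to 0$, which forces the Right-end restriction on the $C$-claim) is precisely what the paper handles by having Right reduce $G$ to a Right end before descending to $C_{j-1}$. The paper presents this more informally, describing Right's strategy inline and inducting on $i$ and $\birth(G)$ rather than naming the six auxiliary claims about $A_j$ and $C_j$, but the content is the same.
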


\begin{proof}
We prove the result by induction on $i$ and $\birth(G)$.
If $i=0$, then $G=0$ and $o^-(0+B_0)=\R$.

Assume now $i \geqslant 1$.
Assume first Right starts the game.
If he has an available move $G^R$ in $G$, he can play it and win by induction as $\birth(G^R) < \birth(G)$.
Otherwise, he plays to $G + \{0|B_{i-1}\}$.
From there, Left can play to $G + 0$ or possibly to some $G^L + \{0|B_{i-1}\}$.
In the first case, Right wins as he has no move available.
In the second case, he can play to $G^L + B_{i-1}$ where he wins by induction as $\birth(G^L) \leqslant \birth(G) - 1 \leqslant i-1$.

Assume now Left starts the game.
If she plays to some $G^L + B_i$, then Right wins by induction as $\birth(G^L) < \birth(G)$.
Otherwise, she plays to $G + \{\{0|B_{i-1}\}|\{0|B_{i-1}\}\}$.
Then Right plays in the $G$ component as long as it is possible and Left is playing is the $G$ component.
If Right cannot play in the $G$ component, the proof is similar to the proof of Right winning when he starts in $G+B_i$ since playing in the $G$ component can only decrease its birthday.
If Left plays to some $G' + \{0|B_{i-1}\}$, Right answers to $G' + B_{i-1}$ where he wins by induction since $\birth(G') \leqslant \birth(G) - 1 \leqslant i-1$ as Right played at least once in the $G$ component.
\end{proof}

This leads to the following corollary.

\begin{cor}
For any game $G$ and any $i$ with $i \geqslant \birth(G)$, we have:
\begin{enumerate}[(i)]
\item $o^-(G+B_i) = \R$,
\item $o^-(G+\overline{B_i}) = \L$,
\item $o^-(G+\{\overline{B_i}|B_i\}) = \N$.
\end{enumerate}
\end{cor}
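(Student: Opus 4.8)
The plan is to derive all three parts from Theorem~\ref{th:GGiR}, which already establishes part~(i) verbatim. For parts~(ii) and~(iii) the only additional ingredients I need are two elementary facts about the conjugate: that conjugation reverses misère outcomes (interchanging $\L$ and $\R$ while fixing $\N$ and $\P$), and that it preserves birthday, so $\birth(\overline{G}) = \birth(G)$. Both follow by a routine induction on the recursive definition of $\overline{G}$, together with the identity $\overline{G+H} = \overline{G} + \overline{H}$. I would state this conjugation fact explicitly (or cite it) at the outset, since the whole corollary rests on it.

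For part~(ii), I would observe that $\overline{G + \overline{B_i}} = \overline{G} + B_i$, using $\overline{\overline{B_i}} = B_i$ and the additivity of conjugation. Since $i \geqslant \birth(G) = \birth(\overline{G})$, Theorem~\ref{th:GGiR} applied to the game $\overline{G}$ gives $o^-(\overline{G} + B_i) = \R$. Reversing outcomes through the conjugate then yields $o^-(G + \overline{B_i}) = \L$, which is exactly~(ii).

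For part~(iii), I would show directly that the first player always wins the position $G + \{\overline{B_i}\,|\,B_i\}$, so that its outcome is $\N$. If Left moves first, she moves the second component to its Left option, reaching $G + \overline{B_i}$ with Right to move; since $o^-(G + \overline{B_i}) = \L$ by~(ii), Left wins from this position regardless of who is to move, in particular against Right. Symmetrically, if Right moves first, he moves the component to its Right option, reaching $G + B_i$ with Left to move; since $o^-(G + B_i) = \R$ by~(i), Right wins whoever moves next. Hence both openers win, and the outcome is $\N$.

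The argument is essentially bookkeeping, so no step is genuinely hard; the only points requiring care are the outcome-reversal fact invoked in~(ii), and, in~(iii), keeping track of whose turn it is after the opening move so that the $\L$- and $\R$-position conclusions are applied to the correct player.
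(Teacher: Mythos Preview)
Your argument is correct. The paper states this corollary without proof, treating it as an immediate consequence of Theorem~\ref{th:GGiR}; your derivation via conjugation for~(ii) and the direct winning first moves for~(iii) is exactly the intended routine verification.
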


Unfortunately, we cannot hope for a family $(H_i)_{i \in \NN}$ such that for any game $G$ and any $i$ with $i \geqslant \birth(G)$, we have $o^-(G+H_i)=\P$, as this would mean $o^-(0+H_1) = \P$ which implies $o^-(*+H_1) = \N$.
Nevertheless, it might be possible to construct for any game $G$ a binary dicot game $H_G$ such that $o^-(G+H_G) = \P$, which we leave as an open question.

\begin{question}
Does there exist a game $G$ such that for any binary dicot game $X$, we have \mbox{$o^-(G+X) \neq \P$}?
\end{question}

In mis\`ere play, having a game $G$ greater than any element of a set of games closed under conjugates is not equivalent to having the sum of each game of this set with $G$ be an $\L$-position.
Hence we may still wonder if such a game $G$ exists for the sets we considered earlier.

For dicot games, we may use the adjoint again and prove that the dicot game $(G^o)^o + *$ and $G$ are incomparable for any game $G$, as their sums with $G^o$ have respective outcomes $\N$ and $\P$.
However, we propose here a proof that no such game exists for both binary dicot games and impartial games, which implies the result on dicot games.

We define a family $(s_i)_{i \in \NN}$ of games as follows:
\begin{itemize}
\item $s_0 = 0$,
\item $s_{i+1} = \{s_i|s_i\}$.
\end{itemize}
Observe that we can recursively verify that $s_i$ is both impartial and binary dicot for any $i$, and $s_i$ can be seen as the sum of $i$ games, each of them being $*$.
Actually, we can also recursively verify that any game that is both impartial and binary is of the form $s_i$ for some integer $i$.

Recall that the set of natural integers is recursively defined as $k+1 = \{k|\cdot\}$.

\begin{theo}
For any game $G$ and any integer $i$ with $i \geqslant \birth(G)+1$, $G$ and $s_i$ are incomparable.
\end{theo}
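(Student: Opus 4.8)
The plan is to use conjugation to collapse the two-sided incomparability into a single one-directional claim, and then to treat the two relevant universes separately by an explicit witness in each. Since $s_i$ is impartial it is self-conjugate, $\birth(\overline{G})=\birth(G)$, and $\ab$, $\II$, $\DD$ are all closed under conjugation; hence for any conjugation-closed universe $\UU$ one has $G\geqslant^-_\UU s_i$ if and only if $s_i\geqslant^-_\UU\overline{G}$ (conjugate each witness $X\in\UU$, which permutes $\UU$ and reverses the outcome order). Thus it suffices to prove the one-directional statement: for every game $H$ and every $i\geqslant\birth(H)+1$ we have $s_i\not\geqslant^-_\UU H$. Instantiating at $H=G$ gives $s_i\not\geqslant^-_\UU G$, and at $H=\overline{G}$ gives $G\not\geqslant^-_\UU s_i$. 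I would carry this out for $\UU=\ab$ and $\UU=\II$; since $\ab\subseteq\DD$ and $\II\subseteq\DD$, incomparability modulo these smaller universes implies incomparability modulo $\DD$, which is the stated consequence.

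For the binary dicot universe I take the witness $X=\overline{B_{\birth(H)}}\in\ab$. By the corollary to Theorem~\ref{th:GGiR} we have $o^-(H+\overline{B_{\birth(H)}})=\L$, so $s_i\not\geqslant^-_{\ab}H$ follows once we know $o^-(s_i+\overline{B_{\birth(H)}})\neq\L$. As $s_i$ is self-conjugate this outcome is the conjugate of $o^-(s_i+B_{\birth(H)})$, so the whole case reduces to the lemma that $o^-(s_i+B_k)\neq\R$ whenever $i>k$. I would prove this by induction on $k$ via a strategy argument patterned on the proof of Theorem~\ref{th:GGiR}: $B_k$ can only enforce a Right win against a partner of birthday at most $k$, and the surplus $i-k\geqslant 1$ of stars in $s_i$ supplies the non-Right player enough spare tempo to avoid outcome $\R$, the base case $o^-(s_i+B_0)\neq\R$ being checked directly.

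For the impartial universe the witness must itself be impartial, and this is where the difficulty concentrates. If some impartial $X$ makes $H+X$ a Left win, we are done at once: then $s_i+X$ is impartial, its outcome lies in $\{\N,\P\}$, and so $o^-(s_i+X)\not\geqslant\L=o^-(H+X)$. Otherwise $o^-(H+X)\in\{\N,\P,\R\}$ for every impartial $X$, and two further subcases are easy. If $o^-(H)\neq o^-(s_i)$, then $X=0$ already separates them; and using the identity $o^-(Y+*)=\N$ for $\P$-positions $Y$, the witnesses $s_{\birth(H)}$ and $s_{\birth(H)+1}$ force an $\{\N,\P\}$ split whenever the parity of $i-\birth(H)$ is favourable.

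The remaining, even-surplus, case is the main obstacle: no sum of stars can separate two impartial games of equal genus, so one must exhibit a wild impartial distinguisher uniformly in $H$. The leverage is the hypothesis $i\geqslant\birth(H)+1$: since $s_i$ is in misère canonical form with canonical birthday $i$, while the canonical birthday of $H$ tested against $\II$ is at most $\birth(H)<i$, the two cannot share a canonical form, and the impartial misère theory then guarantees an impartial $X$ with $o^-(s_i+X)\neq o^-(H+X)$. Converting this existence statement into an explicit, birthday-controlled witness — ideally by a strong induction on $\birth(H)$ that builds $X$ recursively, in the spirit of the strategies used for Theorems~\ref{th:IL} and~\ref{th:GGiR} — is the step I expect to require the most care.
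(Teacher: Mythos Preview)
Your approach has a fundamental gap: the stronger statement you are trying to prove is false. The theorem asserts incomparability in the \emph{full} mis\`ere universe $\GG$, not modulo $\DD$, $\ab$, or $\II$; the paper even remarks immediately after the proof that the distinguishing games used are not dicot, and raises the modular question separately. Your proposed route, proving incomparability modulo $\ab$ and modulo $\II$ and then passing to $\DD$, cannot work because the conclusion is false modulo those universes. For instance, take $G=0$ and $i=2$: then $i\geqslant\birth(G)+1$, yet $s_2=*+*\equiv^-_\DD 0$ (this is explicitly used elsewhere in the paper), and hence $s_2\equiv^-_\ab 0$ and $s_2\equiv^-_\II 0$ as well. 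So $G$ and $s_i$ are \emph{comparable}, indeed equivalent, modulo each of these smaller universes.

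Your concrete lemma also fails: the claim $o^-(s_i+B_k)\neq\R$ for $i>k$ is wrong already at $i=2$, $k=0$, where a direct check gives $o^-(s_2+B_0)=\R$. The paper's proof avoids all of this by working in $\GG$ and choosing the simplest possible non-dicot witnesses: the integer $i=\{i-1\mid\cdot\}$ and its conjugate $\overline{i}$. One checks $o^-(G+i)=\R$ (Right exhausts $G$ in at most $\birth(G)<i$ moves while Left is stuck with the $i$ component) but $o^-(s_i+i)\geqslant\N$ (Left plays only in $i$, Right only in $s_i$, and Right runs out last); the conjugate pair gives the other direction. No induction, no case split on universes, and no auxiliary lemma is needed.
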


\begin{proof}
Let $G$ be a game and $i$ an integer such that $i \geqslant \birth(G)+1$.

Consider the game $G+i$.
As Left would need at least $i$ moves to get rid of the $i$ component, where Right has no move, Right can win by only playing in $G$ where he cannot play more than $\birth(G)$ moves.
Hence we have $o^-(G+i)=\R$.
Now consider the game $s_i+i$.
Playing first, Left can choose to only play in the $i$ component while Right has no choice but to play in the $s_i$ component.
As both games would last $i$ moves and Left started, Right will play the last move and lose.
Hence we have $o^-(s_i+i)\geqslant\N$.

A similar reasonment would prove that $o^-(G+\overline{i})=\L$ and $o^-(s_i+\overline{i})\leqslant\N$, which concludes the proof.
\end{proof}

Again, it is interesting to see how simple the families $(s_i)_{i \in \NN}$ and $\NN$ are, which emphasizes the complexity of mis\`ere play: in normal play, any $s_i$ is equivalent either to $0$ or to $*$, depending on the parity of $i$; in mis\`ere play, we just proved that they are pairwise incomparable.

It is worth noticing here that the games used to distinguish $G$ and $s_i$ are not dicot, a fortiori neither impartial nor binary dicot.
Hence we might consider the question modulo the universe of dicot games.
For dicot games, the answer is still negative, as the game used to prove that $G$ and $(G^o)^o+*$ are incomparable, namely $G^o$, is dicot, but it might be possible to find a game greater than all impartial games or all binary dicot games modulo the universe of dicot games.
In particular, in the case of binary impartial games, their intersection, such a game exists, and even such a dicot game.

We define a game $S = \big\{0,*\big|\{0,*|0,*\}\big\}$.
Note that $S$ is dicot.

\begin{prop}
For any binary impartial game $G$, we have $S \geqslant^-_\DD G$.
\end{prop}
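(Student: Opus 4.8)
The plan is to reduce the statement to a comparison with the two simplest impartial games together with a single equivalence. As recalled just above, a game that is both binary and impartial is exactly some $s_i$; hence it suffices to prove $S \geqslant^-_\DD s_i$ for every $i$, that is, $o^-(S+X)\geqslant o^-(s_i+X)$ for every dicot game $X$. Throughout I use the standard reformulation of $\geqslant^-_\DD$: for each dicot $X$ one checks that if Left wins $s_i+X$ moving first (resp.\ moving second) then she also wins $S+X$ moving first (resp.\ moving second); by the shape of the outcome poset these two implications are exactly equivalent to $o^-(S+X)\geqslant o^-(s_i+X)$. All the comparisons below are argued by induction on $\birth(X)$, the key structural fact being that the only end among dicot games is $0$, so every nonzero dicot offers a move to both players.

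First I would prove the two auxiliary comparisons $S\geqslant^-_\DD 0$ and $S\geqslant^-_\DD *$, which are the routine part. Writing $M=\{0,*\mid 0,*\}$ for the unique Right option of $S$, the induction runs almost mechanically: when Right moves inside $X$, Left answers by the inductive hypothesis applied to $X^R$; when Left wants to act on the $S$ component she uses its Left options $0$ and $*$; and when Right moves $S$ to $M$, Left wins $M+X$ by moving $M$ to $0$ (when she already wins $X$ moving second) or to $*$ (when she already wins $*+X$ moving second). It is precisely the fact that the Left options of $M$ are $0$ and $*$ that lets Left reproduce in $M+X$ whatever second-player win she has in $X$ or in $*+X$, so both inductions close.

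The heart of the argument, and the step I expect to be the main obstacle, is the equivalence $*+*\equiv^-_\DD 0$, i.e.\ $o^-(*+*+X)=o^-(X)$ for every dicot $X$. I would prove it by induction on $\birth(X)$ using a pairing strategy: the two copies of $*$ are treated as a mirrored pair, any move of either player into one star is immediately answered in the other star (returning to the position $X'$ with both stars removed), while moves in the $X$ component are played according to the assumed strategy on $X$. The delicate point is the mis\`ere endgame---the reason $*+*\not\equiv 0$ in the universe of all games---but here it is tamed exactly because the only dicot end is $0$: a player is never forced to break the pairing by running out of moves in a nonzero $X$-component, so the parity of the total number of moves is preserved and the mis\`ere outcome is unchanged.

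Finally I would assemble the pieces. From $*+*\equiv^-_\DD 0$ and the closure of $\DD$ under addition one gets $s_i\equiv^-_\DD s_{i-2}$ for all $i\geqslant 2$, hence $s_i\equiv^-_\DD 0$ for $i$ even and $s_i\equiv^-_\DD *$ for $i$ odd. Combining this with the auxiliary comparisons $S\geqslant^-_\DD 0$ and $S\geqslant^-_\DD *$ yields $S\geqslant^-_\DD s_i$ in either parity, which is the desired statement for every binary impartial game.
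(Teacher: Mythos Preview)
Your proposal is correct and follows essentially the same route as the paper. Both arguments reduce to the two comparisons $S\geqslant^-_\DD 0$ and $S\geqslant^-_\DD *$ together with the fact that every $s_i$ is $\DD$-equivalent to $0$ or $*$; the paper simply cites the latter from Allen~\cite{mpg}, while you reprove it via the pairing strategy for $*+*\equiv^-_\DD 0$, and the paper phrases the two comparisons as ``follow the strategy on $X$ and react when the $S$ component is touched'' rather than as an induction on $\birth(X)$, but the content is the same.
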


\begin{proof}
Let $G$ be a binary impartial game.
As mentioned earlier, a binary impartial game is of the form $s_i$ for some integer $i$.
Modulo $\DD$, $s_i$ is equivalent to either $0$ or $*$~\cite{mpg}.
Hence we can consider $G$ to be either $0$ or $*$.

First assume $G=0$ and consider $X$ a dicot game such that Left has a winning strategy on $0+X$ playing first (respectively second).
On $S + X$, Left can follow the same strategy, until either Right plays on the $S$ component or she has no move available in the $X$ component.
In the first case, she can answer in the $S$ component by moving $\{0,*|0,*\}$ to $0$ and resume her winning strategy.
In the second case, it means the $X$ component has been reduced to $0$ and she wins by moving from $S$ to $*$.
Therefore $S \geqslant^-_\DD 0$.

Now assume $G=*$ and consider $X$ a dicot game such that Left has a winning strategy on $*+X$ playing first (respectively second).
On $S + X$, Left can follow the same strategy, unless the strategy recommends that she plays in the $*$ component or Right eventually plays in the $S$ component.
In the first case, the move recommended by the strategy is from $*$ to $0$, hence moving from $S$ to $0$ is also a winning move.
In the second case, she can answer in the $S$ component by moving $\{0,*|0,*\}$ to $*$ and resume her winning strategy.
Therefore $S \geqslant^-_\DD *$.
\end{proof}

As $S$ is dicot, it could be interesting to find an impartial game or a binary game sharing that same property.
Unfortunately, as an impartial game can only have outcome $\P$ or $\N$, no impartial game can be both greater than $0$ and greater than $*$ modulo any set containing $0$.
Moreover, we will see in the next section that any binary game greater than $0$ modulo binary dicot games has outcome $\N$, and as such is also incomparable to $*$ modulo any set containing $0$.

As impartial games seem to have a more predictable behaviour (in particular we have $I\geqslant^-_\II X$ for any impartial game $X$), we highlight the following question.

\begin{question}
Does there exist a game $G$ such that for any impartial game $X$, we have $G \geqslant^-_\DD X$?
\end{question}

In the case the answer is positive, it would also be interesting to find such games $G$ being dicot, as we know that no impartial game would have that property.

\section{Comparison modulo binary dicot games}
\label{sec:bin}

In this section, we focus on binary games, dicot games and their intersection.

First, we prove a useful result on the mis\`ere outcome of the adjoint of any binary game.

\begin{lem}
\label{lem:adjout}
Let $G$ be a binary game.
Then the mis\`ere outcome of $G^o$ is totally determined by the mis\`ere outcome of $G$, namely:
\begin{enumerate}[(i)]
\item if $o^-(G) = \L$, then $o^-(G^o) = \L$,
\item if $o^-(G) = \R$, then $o^-(G^o) = \R$,
\item if $o^-(G) = \N$, then $o^-(G^o) = \P$,
\item if $o^-(G) = \P$, then $o^-(G^o) = \N$.
\end{enumerate}
\end{lem}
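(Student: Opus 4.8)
The plan is to reduce the four implications to two clean Boolean identities about first-player wins, and then to prove those by induction on the birthday, with the binary hypothesis entering at exactly one point. First I would encode the misère outcome of a game $K$ by the pair $\big(L(K),R(K)\big)$, where $L(K)$ (resp.\ $R(K)$) asserts that Left (resp.\ Right) wins $K$ when moving first. Since under the misère convention the player unable to move wins, $L(K)$ holds whenever $K$ is a Left end, and otherwise $L(K)=\bigvee_{K^L}\neg R(K^L)$, with the symmetric rule for $R(K)$. The four outcomes then read $\L=(\mathrm{T},\mathrm{F})$, $\R=(\mathrm{F},\mathrm{T})$, $\N=(\mathrm{T},\mathrm{T})$ and $\P=(\mathrm{F},\mathrm{F})$, and a line-by-line inspection of the four cases shows that the statement is equivalent to the two identities
$$L(G^o)=\neg R(G)\qquad\text{and}\qquad R(G^o)=\neg L(G).$$

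Next I would record that, by the Remark of Dorbec et al., $G^o$ is a nonzero dicot for every $G$, hence always has both a Left and a Right option and is never an end; this removes the end cases when expanding $L(G^o)$ and $R(G^o)$. I would then prove the two displayed identities simultaneously by induction on $\birth(G)$. The base case $G=0$ gives $G^o=*$, whose pair is $(\mathrm{F},\mathrm{F})$, matching $o^-(0)=\N$. For the inductive step I would read off the options of $G^o$ from the four lines of the adjoint definition: when $G$ is neither end, the Right options of $G^o$ are exactly the adjoints of the Left options of $G$, so $R(G^o)=\bigvee_{G^L}\neg L\big((G^L)^o\big)$, and the induction hypothesis $L\big((G^L)^o\big)=\neg R(G^L)$ turns this into $R(G^o)=\bigvee_{G^L}R(G^L)$. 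The subcases where $G$ is a (nontrivial) Left or Right end are immediate, because the corresponding option of $G^o$ is forced to be $0$ and $L(0)=R(0)=\mathrm{T}$.

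The main obstacle — and the only place the binary hypothesis is used — is to match $R(G^o)=\bigvee_{G^L}R(G^L)$ with the required value $\neg L(G)=\bigwedge_{G^L}R(G^L)$. These two expressions differ in general (a disjunction need not equal a conjunction over the same family), but when $G$ is binary there is at most one Left option, so the disjunction and the conjunction collapse to the single term $R(G^L)$ and the two sides coincide; the identity $L(G^o)=\neg R(G)$ is handled symmetrically using the single Right option. To halve the bookkeeping one may instead prove only $R(G^o)=\neg L(G)$ and deduce the other from the conjugate symmetry $\overline{G^o}=\overline{G}^o$ together with $L(\overline K)=R(K)$, since conjugation of a binary game is again binary and preserves birthday. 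I expect the verification of the collapse step, and the care needed to keep the end subcases of the adjoint definition straight, to be the only delicate points; everything else is a routine outcome computation.
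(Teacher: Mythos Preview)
Your proof is correct and follows essentially the same induction as the paper. The paper argues directly with outcome inequalities---e.g., if $o^-(G)\geqslant\N$ then ``Left's only move from $G$'' is to an $\L$- or $\P$-position, hence ``Right's only move from $G^o$'' is to an $\L$- or $\N$-position by induction, giving $o^-(G^o)\geqslant\P$---and your Boolean identities $L(G^o)=\neg R(G)$, $R(G^o)=\neg L(G)$ are simply a compact encoding of the same four implications; your collapse of $\bigvee$ to $\bigwedge$ over a singleton is exactly what the paper means by ``only move'', which is where binarity enters in both arguments.
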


\begin{proof}
We prove the result by induction on $G$.
If $G$ is a Left end, then $o^-(G) \geqslant \N$, and as $\Rset{G^o} = \{0\}$, we have $o^-(G^o) \geqslant \P$.
Likewise, if $G$ is a Right end, $o^-(G) \leqslant \N$ and $o^-(G^o) \leqslant \P$.
Assume now $G$ is neither a Left end nor a Right end.
Assume first $o^-(G) \geqslant \N$, that is Left's only move from $G$ is to a position with outcome $\L$ or $\P$.
Then Right's only move from $G^o$ is to a position with outcome $\L$ or $\N$ by induction, and $o^-(G^o) \geqslant \P$.
Similarly, if $o^-(G) \leqslant \N$, then $o^-(G^o) \leqslant \P$.
Assume now $o^-(G) \geqslant \P$, that is Right's only move from $G$ is to a position with outcome $\L$ or $\N$.
Then Left's only move from $G^o$ is to a position with outcome $\L$ or $\P$ by induction, and $o^-(G^o)\geqslant \N$.
Similarly, if $o^-(G) \leqslant \P$, then $o^-(G^o) \leqslant \N$, which concludes the proof.
\end{proof}

This only works with binary games as, for example, $o^-(\{0,*|0\}) = \L$ and $o^-(\{0,*|0\}^o)=\N$.
The argument is that eventhough Left's winning move to $*$ creates a losing move for Right to $*^o$, Left's other move to $0$ is losing and thus creates a winning move to $0^o = *$ for Right.

We can make the following remark about the adjoint of binary games.

\begin{rem}
The adjoint of any binary game is a binary dicot game.
\end{rem}

Using Lemma~\ref{lem:adjout}, we can give the outcome of any binary game greater than or equivalent to $0$ modulo $\ab$.

\begin{prop}
Let $G$ be a binary game such that $G \geqslant^-_\ab 0$.
Then $o^-(G) = \N$.
\end{prop}

\begin{proof}
As $o^-(0) = \N$, we must have $o^-(G) \geqslant \N$.
Assume $o^-(G) = \L$.
Then $o^-(G^o) = \L$.
But $o^-(G+G^o) = \P < \L = o^-(0+G^o)$, contradicting the fact that $G \geqslant^-_\ab 0$.
Hence $o^-(G) = \N$.
\end{proof}

This leads to the following corollary.

\begin{cor}
\label{cor:geq->eq}
Let $G$ be a binary game such that $G \geqslant^-_\ab 0$, $G^{LR}$ exists and $G^{LR} = 0$.
Then $G \equiv^-_\ab 0$.
\end{cor}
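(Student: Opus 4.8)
Since the hypothesis already supplies $G \geqslant^-_\ab 0$, my plan is to prove only the reverse inequality $0 \geqslant^-_\ab G$, that is, $o^-(G+X) \leqslant o^-(X)$ for every binary dicot game $X$; combined with the hypothesis this gives $G \equiv^-_\ab 0$. I would prove this inequality by structural induction on $X$. It is convenient first to reformulate the conclusion: because the outcome ordering is the product order on the two Booleans ``Left wins moving first'' and ``Left wins moving second'', the inequality $o^-(G+X) \leqslant o^-(X)$ is equivalent to the conjunction of the two implications \emph{(first-player)} if Left wins $G+X$ moving first then she wins $X$ moving first, and \emph{(second-player)} if Left wins $G+X$ moving second then she wins $X$ moving second. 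These two implications are what the inductive step will establish.

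For the base case $X=0$ we have $G+0=G$, and the preceding proposition gives $o^-(G)=\N=o^-(0)$, so the inequality holds. For the inductive step, take $X\neq 0$ binary dicot, so that $X$ has exactly one Left option $X^L$ and exactly one Right option $X^R$, both binary dicot, for which the induction hypotheses $o^-(G+X^L)\leqslant o^-(X^L)$ and $o^-(G+X^R)\leqslant o^-(X^R)$ are available.

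For the first-player implication, suppose Left wins $G+X$ moving first. If her winning move is $G+X \to G+X^L$, then she wins $G+X^L$ as second player, so $o^-(G+X^L)\geqslant\P$; the induction hypothesis then yields $o^-(X^L)\geqslant\P$, and moving $X\to X^L$ wins $X$ for Left. The remaining case is when her winning move is into the $G$ component, $G+X \to G^L+X$, and this is exactly where $G^{LR}=0$ enters: since Left wins $G^L+X$ as second player she wins against every Right reply, in particular against $G^L\to G^{LR}=0$, which returns the position to $0+X=X$ with Left to move, so Left wins $X$ moving first. For the second-player implication, suppose Left wins $G+X$ moving second; considering Right's move $G+X \to G+X^R$, Left must win $G+X^R$ moving first, so $o^-(G+X^R)\geqslant\N$, and the induction hypothesis gives $o^-(X^R)\geqslant\N$. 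As $X^R$ is the unique Right option of $X$, this shows Left wins $X$ moving second, completing the induction and hence establishing $0 \geqslant^-_\ab G$.

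The main obstacle is the first-player case in which Left plays inside $G$. A naive move-pairing argument (answer each Left move in $G$) fails under the misère convention, since pairing off Left's moves reverses the parity of who makes the last move; the clean way around this is the outcome-based induction above, where the reversing reply $G^L\to G^{LR}=0$ carries the whole argument and is the sole place the hypothesis $G^{LR}=0$ is used. Note also that no dicot assumption on $G$ itself is needed: only that $G$ is binary, that $o^-(G)=\N$ (from the preceding proposition), and that the reversing move to $0$ is available.
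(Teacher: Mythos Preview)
Your proof is correct and uses essentially the same idea as the paper: the key reversing move $G^L\to G^{LR}=0$ together with $o^-(G)=\N$ (supplied by the preceding proposition) yields $0\geqslant^-_\ab G$. The only difference is presentational: the paper phrases it as a direct strategy-following argument from Right's side (Right mimics his winning strategy in $X$, answers any Left move in $G$ by playing to $G^{LR}=0$, and wins by $o^-(G)=\N$ once $X$ is exhausted), whereas you unfold this into an explicit structural induction on $X$ that analyzes Left's possible winning moves in $G+X$; the two arguments are formally equivalent.
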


\begin{proof}
Let $X$ be a binary dicot game such that Right has a winning strategy on $X$ playing first (respectively second).
On $G + X$, Right can follow the same strategy, until either Left plays on the $G$ component or he has no move available in the $X$ component.
In the first case, he can answer in the $G$ component by moving from $G^L$ to $G^{LR} = 0$.
In the second case, as $G \geqslant^-_\ab 0$, we have $o^-(G) = \N$, and as the $X$ component has been reduced to $0$, Right wins playing first in $G$.
Hence $G \leqslant^-_\ab 0$.

As we assumed $G \geqslant^-_\ab 0$, we have $G \equiv^-_\ab 0$.
\end{proof}

The reader familiar with canonical forms of games would have recognized that $G^L$ is $\ab$-reversible through $0$.
In particular, Corollary~\ref{cor:geq->eq} can be rephrased as {\it A binary game that has a $\ab$-reversible option through $0$ is $\ab$-equivalent to $0$}.
A slight modification of results presented in~\cite{dicotaxo, canonical} (see Lemmas~\ref{34} and~\ref{54} at the end of this section) would lead to a canonical form for binary dicot games modulo $\ab$, but as we show at the end of this section, it would be the same form as modulo $\DD$, hence we only mention its existence.

Despite Corollary~\ref{cor:geq->eq}, which could make us believe that many binary dicot games greater than or equal to $0$ modulo $\ab$ are actually equivalent to $0$, there exist some binary dicot games that are strictly greater than $0$ modulo $\ab$.
We here give an example and define $Z = \big\{\{*|\{*|0\}\}\big|*\big\}$ and $G_a = \{0|*\}$.
Observe that $Z$ is a mis\`ere $\N$-position and $G_a$ is a mis\`ere $\R$-position.

\begin{prop}
\label{prop:Z>0}
We have $Z >^-_\ab 0$.
\end{prop}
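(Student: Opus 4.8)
The plan is to prove the two halves of $Z >^-_\ab 0$ separately: first that $Z \geqslant^-_\ab 0$, and then that the inequality is strict, i.e. that $0 \geqslant^-_\ab Z$ fails. Throughout I would write $Z^L = \{*|\{*|0\}\}$ for the unique Left option of $Z$ and recall that $Z^R = *$; short by-hand checks give $o^-(Z^L)=\L$ and $o^-(*)=\P$. Reading off Figure~\ref{fig:order}, to get $Z \geqslant^-_\ab 0$ it suffices to show, for every binary dicot $X$, that Left wins $Z+X$ moving first whenever she wins $X$ moving first, and that Left wins $Z+X$ moving second whenever she wins $X$ moving second.

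For that first part I would have Left fix a winning strategy on $X$ (as first, respectively second, player) and play it in the $X$ component, keeping $Z$ in reserve. The only new situations are Right's moves in $Z$ and the end of the game. Since Right's sole move from $Z$ is $Z \to Z^R = *$, Left answers it by $* \to 0$, deleting the $Z$ component without changing $X$ and returning the turn to Right exactly where it was; hence Right can disturb the reserve at most once, and at the cost of a full move-pair. When the $X$ strategy is exhausted it is Left's turn, because her strategy forces Right to make the last move in $X$. If the reserve has been spent the whole position is $0$ with Left to move and she wins; if the reserve survives she plays the extra move $Z \to Z^L$ and wins because $o^-(Z^L)=\L$ guarantees her the continuation. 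This handles both the first- and second-player cases uniformly, establishing $Z \geqslant^-_\ab 0$.

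For strictness I would use $G_a = \{0|*\}$, which is binary dicot with $o^-(G_a)=\R$. Since $\R$ is the least outcome, $0 \geqslant^-_\ab Z$ would force $\R = o^-(0+G_a) \geqslant o^-(Z+G_a)$, hence $o^-(Z+G_a)=\R$; so it is enough to exhibit one scenario Left wins in $Z+G_a$. Moving first, Left plays $Z \to Z^L$, reaching $Z^L + G_a$ with Right to move. A short case check on Right's two options there, namely $\{*|0\} + G_a$ (outcome $\N$) and $Z^L + *$ (outcome $\L$), shows both are won by the player to move, so Right loses and Left wins $Z+G_a$ moving first. Thus $o^-(Z+G_a)\neq\R$, $0 \geqslant^-_\ab Z$ fails, and combined with the previous paragraph this yields $Z >^-_\ab 0$.

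The main obstacle is the misère bookkeeping in the first part: one must verify that neither absorbing Right's excursion $Z \to * \to 0$ nor playing the final tempo move $Z \to Z^L$ ever makes Left the last player to move. This is precisely where it matters that $o^-(Z^L)=\L$ rather than merely $\N$, since it ensures Left still wins after spending her extra move; and the single-use nature of the reserve (Right cannot re-enter $Z$ once it is gone) is what keeps the parity argument clean.
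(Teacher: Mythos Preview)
Your proposal is correct and follows essentially the same approach as the paper: for $Z \geqslant^-_\ab 0$ you have Left mirror her winning strategy on $X$ and cancel Right's unique excursion $Z\to *$ by $*\to 0$, then use that $Z$ is good for Left once $X$ is exhausted; for strictness you use the same distinguishing game $G_a=\{0|*\}$. The only cosmetic differences are that the paper invokes $o^-(Z)=\N$ at the end rather than your equivalent $o^-(Z^L)=\L$, and simply asserts $o^-(Z+G_a)=\L$ where you carry out an explicit (and sufficient) check that $o^-(Z+G_a)\geqslant\N$.
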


\begin{proof}
Let $X$ be a binary dicot game such that Left has a winning strategy on $X$ playing first (respectively second).
On $Z + X$, Left can follow the same strategy, until either Right plays on the $Z$ component or she has no move available in the $X$ component.
In the first case, she can answer in the $Z$ component by moving from $*$ to $0$.
In the second case, as $X$ is dicot, it means the players have reduced $X$ to $0$, and as $Z$ is an $\N$-position, Left wins playing first a priori.
Hence $Z \geqslant^-_\ab 0$. \\
To see that the inequality is strict, one needs only see that $o^-(0+G_a) = \R$ while $o^-(Z+G_a) = \L$.
\end{proof}

We now want to compare comparison modulo $\ab$ with comparison modulo $\DD$.
The following results lead to Theorems~\ref{th:B=>D0}, \ref{th:B=>D} and~\ref{th:B=>Db}, which we consider the most interesting results of this paper, together with Theorem~\ref{th:I=>D}.

We first focus on the game $0$ and give a sufficient condition for a game to be greater than or equal to $0$ modulo $\ab$.

\begin{lem}
\label{lem:1step0}
Let $G$ be a game with mis\`ere outcome $\N$ or $\L$ such that for any Right option $G^R$ of $G$, there exists a Left option $G^{RL}$ of $G^R$ with $G^{RL} \geqslant^-_\ab 0$.
Then $G \geqslant^-_\ab 0$.
\end{lem}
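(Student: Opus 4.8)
The plan is to argue directly from the definition of $\geqslant^-_\ab$: to prove $G \geqslant^-_\ab 0$ it suffices to show, for every binary dicot game $X$, that $o^-(G+X) \geqslant o^-(X)$. Because of the partial order on outcomes, this reduces to two claims: if Left wins $X$ playing first then she wins $G+X$ playing first, and if Left wins $X$ playing second then she wins $G+X$ playing second. In both cases Left's strategy on $G+X$ will be to follow her winning strategy in the $X$ component, deviating only when Right plays in the $G$ component or when she runs out of moves in $X$.

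First I would record the invariant that sustains the argument: as long as Left follows her $X$-strategy, every time it becomes Right's turn in $G+X$ the current value $X'$ of the $X$ component satisfies $o^-(X') \geqslant \P$, that is, Left wins $X'$ with Right to move. This holds at the start in the second-player case and after Left's first move in the first-player case, and it is preserved by each Right-then-Left exchange inside $X$ by the defining property of a winning $X$-strategy. Note also that since $X$ is binary dicot and these games are closed under followers, every such $X'$ is again binary dicot, so that $\geqslant^-_\ab$ applies to it.

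There are then two ways Left's imitation can be interrupted. If Right ever plays in the $G$ component, moving $G$ to some $G^R$, Left answers with the option $G^{RL} \geqslant^-_\ab 0$ provided by the hypothesis, reaching $G^{RL}+X'$ with Right to move; since $X'$ is binary dicot with $o^-(X') \geqslant \P$, the inequality $G^{RL} \geqslant^-_\ab 0$ gives $o^-(G^{RL}+X') \geqslant o^-(X') \geqslant \P$, so Left wins from there immediately, with no further interaction between the two components. If instead Left is left without a move in $X$, then, $X$ being dicot, its only reachable end forces $X$ to have been reduced to $0$; the position is then $G$ with Left to move, and the assumption $o^-(G) \in \{\N,\L\}$ lets Left win playing first. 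These two escapes, together with the plain moves inside $X$, cover every situation.

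The step I expect to require the most care is the misère end-handling, namely checking that the $G$ component — which, unlike the dicot game $X$, need not be played out — is always dealt with correctly: one must confirm that when $X$ collapses to $0$ it is genuinely Left's turn in $G$, so that $o^-(G) \geqslant \N$ is the relevant quantity, and that the single hypothesis $G^{RL} \geqslant^-_\ab 0$ is strong enough to close the game at once. Verifying the invariant separately for the first- and second-player cases is routine, but it is exactly what makes the two escapes legitimate.
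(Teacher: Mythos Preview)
Your argument is correct and follows essentially the same route as the paper: Left imitates her winning strategy in the $X$ component, answers any Right move in $G$ with the hypothesised $G^{RL}\geqslant^-_\ab 0$, and falls back on $o^-(G)\geqslant\N$ once the dicot $X$ collapses to $0$. Your write-up is simply more explicit than the paper's about the running invariant $o^-(X')\geqslant\P$ and the closure of $\ab$ under followers, which is exactly what justifies invoking $G^{RL}\geqslant^-_\ab 0$ against the current $X'$.
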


\begin{proof}
Let $X$ be a binary dicot game such that Left has a winning strategy on $X$ playing first (respectively second).
On $G+X$, Left can follow the same strategy, until either Right plays on the $G$ component or she has no move available in the $X$ component.
In the first case, she can answer in the $G$ component by moving from $G^R$ to some $G^{RL}$ with $G^{RL} \geqslant^-_\ab 0$.
In the second case, as $G$ has mis\`ere outcome $\N$ or $\L$ and $X$ has been reduced to $0$, Left wins playing first in $G+0$.
Hence $G \geqslant^-_\ab 0$.
\end{proof}

Using Lemma~\ref{lem:1step0}, we can give a characterisation of games greater than or equivalent to $0$ modulo $\ab$.

\begin{lem}
\label{lem:carac0}
Let $G$ be a game and $i$ an integer with $i \geqslant \birth(G)$.
Then we have $G \geqslant^-_\ab 0$ if and only if Left has a winning strategy on $G+\{B_i|0\}$ playing second.
\end{lem}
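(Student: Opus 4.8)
The plan is to prove the two implications separately: the forward direction by a direct outcome computation, and the backward direction by induction on $\birth(G)$ feeding into Lemma~\ref{lem:1step0}. The observation underlying both is that $\{B_i|0\}$ is itself a binary dicot game, since its options $B_i$ and $0$ are binary dicot and it has exactly one option of each kind, so it is a legitimate test game in $\ab$; moreover its unique Left move reaches the ``absorbing'' game $B_i$, whose behaviour is pinned down by Theorem~\ref{th:GGiR}.

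For the forward direction I would first compute $o^-(\{B_i|0\})$. As $i \geqslant \birth(G) \geqslant 0 = \birth(0)$, Theorem~\ref{th:GGiR} gives $o^-(B_i) = \R$, so Left moving first is forced to $B_i$ and loses, while Right moving first is forced to $0$ and loses; hence $o^-(\{B_i|0\}) = \P$. Assuming $G \geqslant^-_\ab 0$ and using $\{B_i|0\} \in \ab$, the definition of comparison modulo $\ab$ yields $o^-(G+\{B_i|0\}) \geqslant o^-(0+\{B_i|0\}) = \P$. The outcomes that are $\geqslant \P$ are exactly $\L$ and $\P$, and in each of these Left wins moving second, which is the desired conclusion.

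For the backward direction I would induct on $\birth(G)$, the base case $G=0$ being immediate. Assume Left has a winning second-player strategy on $G+\{B_i|0\}$; the aim is to verify the two hypotheses of Lemma~\ref{lem:1step0}. First, Right may open by moving the gadget to $0$, leaving $G$ with Left to move, and since Left's strategy wins we get that Left wins $G$ moving first, so $o^-(G) \in \{\N,\L\}$. Second, for each Right option $G^R$, Right may open by moving $G$ to $G^R$, leaving $G^R+\{B_i|0\}$; Left's strategy must reply with a win, and the crucial point is that this reply cannot be the gadget move to $G^R+B_i$, because $i \geqslant \birth(G) > \birth(G^R)$ and Theorem~\ref{th:GGiR} give $o^-(G^R+B_i) = \R$, a position that Right (to move) wins. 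Hence Left's reply lies in $G^R$, reaching some $G^{RL}+\{B_i|0\}$ on which Left wins moving second; since $\birth(G^{RL}) < \birth(G) \leqslant i$, the induction hypothesis applies and yields $G^{RL} \geqslant^-_\ab 0$. With both hypotheses secured, Lemma~\ref{lem:1step0} delivers $G \geqslant^-_\ab 0$.

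The main obstacle is this backward direction, and within it the step showing that Left's winning reply to a Right move inside $G$ must remain inside $G$ rather than collapse the gadget. This is precisely where the absorbing property of $B_i$ is indispensable: without it Left could escape into the gadget and the extraction of a Left option $G^{RL}$ with $G^{RL} \geqslant^-_\ab 0$ would break down. A secondary point to keep clean is the induction bookkeeping, namely that the same index $i$ is reused for the sub-game $G^{RL}$; this is legitimate because $i \geqslant \birth(G) > \birth(G^{RL})$ keeps us in the range $i \geqslant \birth(G^{RL})$ needed to invoke both Theorem~\ref{th:GGiR} and the induction hypothesis.
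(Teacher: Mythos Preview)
Your proof is correct and follows essentially the same approach as the paper: the forward direction uses that $\{B_i|0\}\in\ab$ has mis\`ere outcome $\P$, and the backward direction is an induction feeding into Lemma~\ref{lem:1step0}, with Theorem~\ref{th:GGiR} ruling out Left's gadget reply $G^R+B_i$. Your write-up is in fact a bit more explicit about the bookkeeping (that $\{B_i|0\}$ is binary dicot, and that the same index $i$ still satisfies $i\geqslant\birth(G^{RL})$), but there is no substantive difference.
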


\begin{proof}
As $\{B_i|0\}$ is a binary dicot game with outcome $\P$, if $G \geqslant^-_\ab 0$, then Left has a winning strategy on $G+\{B_i|0\}$ playing second.

Assume now Left has a winning strategy on $G + \{B_i|0\}$ playing second.
We prove the result by induction on $G$.
If $G = 0$, then $G \geqslant^-_\ab 0$.
Assume now $G \neq 0$.
As Right can play from $G + \{B_i|0\}$ to $G + 0$, $o^-(G)$ has to be $\N$ or $\L$.
Assume Right plays from $G + \{B_i|0\}$ to some $G^R + \{B_i|0\}$.
Then Left has a winning answer, which cannot be to $G^R + B_i$ since Theorem~\ref{th:GGiR} states its outcome is $\R$.
Then there exists a Left option $G^{RL}$ of $G^R$ such that Left has a winning strategy on $G^{RL}+\{B_i|0\}$ playing second.
By induction, we have $G^{RL} \geqslant^-_\ab 0$.
Hence for any Right option $G^R$ of $G$, there exists a Left option $G^{RL}$ of $G^R$ with $G^{RL} \geqslant^-_\ab 0$.
Then by Lemma~\ref{lem:1step0}, $G \geqslant^-_\ab 0$.
\end{proof}

The proof of Lemma~\ref{lem:carac0} has for immediate consequence the converse of Lemma~\ref{lem:1step0}.

\begin{cor}
\label{cor:HI0}
Let $G$ be a game such that $G \geqslant^-_\ab 0$.
Then for any Right option $G^R$ of $G$, there exists a Left option $G^{RL}$ of $G^R$ such that $G^{RL} \geqslant^-_\ab 0$.
\end{cor}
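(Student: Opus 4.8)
The plan is to obtain the statement as an immediate by-product of Lemma~\ref{lem:carac0}, since the asserted property is exactly the option condition appearing in the hypothesis of Lemma~\ref{lem:1step0}. Before extracting the options I would record the easy observation that, as $0$ is a binary dicot game, $G \geqslant^-_\ab 0$ already forces $o^-(G) = o^-(G+0) \geqslant o^-(0) = \N$; together with the corollary this yields the full converse of Lemma~\ref{lem:1step0}, though only the option condition needs a real argument here.

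Fix an integer $i \geqslant \birth(G)$. First I would invoke the forward implication of Lemma~\ref{lem:carac0}: since $\{B_i|0\}$ is a binary dicot $\P$-position and $G \geqslant^-_\ab 0$, Left has a winning strategy on $G + \{B_i|0\}$ playing second. Now fix an arbitrary Right option $G^R$ and let Right open by $G + \{B_i|0\} \to G^R + \{B_i|0\}$; Left, to move, must then have a winning reply. The reply into the second component leads to $G^R + B_i$, and here Theorem~\ref{th:GGiR} applies because $i \geqslant \birth(G) > \birth(G^R)$, giving $o^-(G^R + B_i) = \R$, a loss for Left. Hence Left's winning reply lies in the first component, reaching some $G^{RL} + \{B_i|0\}$ on which Left still wins playing second.

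It then remains to turn this local win back into a $\ab$-inequality. Since $G^{RL}$ is a follower of $G$ we have $\birth(G^{RL}) \leqslant i$, so the backward implication of Lemma~\ref{lem:carac0}, applied to $G^{RL}$ with the same $i$, yields $G^{RL} \geqslant^-_\ab 0$. As $G^R$ was arbitrary, this is exactly the claim.

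The single delicate point --- and the only place where anything beyond bookkeeping happens --- is the elimination of Left's component-switching reply in the second paragraph. This is precisely where the choice $i \geqslant \birth(G)$ is spent, via Theorem~\ref{th:GGiR}: without it, Left might escape into $B_i$ and the argument would fail to pin down a useful $G^{RL}$. Everything else is a direct double application of Lemma~\ref{lem:carac0}, and indeed the whole corollary is nothing more than the inductive step in the proof of that lemma read in isolation.
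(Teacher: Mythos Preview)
Your proof is correct and follows essentially the same approach as the paper: the paper simply remarks that the corollary is an immediate consequence of the proof of Lemma~\ref{lem:carac0}, and what you have written is precisely that inductive step unpacked, using both directions of Lemma~\ref{lem:carac0} as a black box together with Theorem~\ref{th:GGiR} to rule out Left's reply into $B_i$. Your closing remark that ``the whole corollary is nothing more than the inductive step in the proof of that lemma read in isolation'' is exactly the paper's point.
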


We now have the tools needed to state Theorem~\ref{th:B=>D0}.

\begin{theo}
\label{th:B=>D0}
Let $G$ be a game.
Then we have $G \geqslant^-_\DD 0$ if and only if we have $G \geqslant^-_\ab 0$.
\end{theo}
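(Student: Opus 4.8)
The plan is to prove the two implications separately: one is immediate from the inclusion $\ab \subseteq \DD$, while the other requires an induction that transfers the recursive description of $\geqslant^-_\ab 0$ obtained above to the universe $\DD$.

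For the implication $G \geqslant^-_\DD 0 \Rightarrow G \geqslant^-_\ab 0$, no work is needed: since $\ab \subseteq \DD$, the observation recorded just after the definition of $\geqslant^-_\UU$ applies directly. The whole content of the theorem is therefore the converse, and I would prove it by induction on $\birth(G)$, assuming as induction hypothesis that $H \geqslant^-_\ab 0 \iff H \geqslant^-_\DD 0$ for every game $H$ of smaller birthday. Suppose $G \geqslant^-_\ab 0$. Taking the test game $X = 0 \in \ab$ forces $o^-(G) \geqslant o^-(0) = \N$, so $o^-(G)$ is $\N$ or $\L$. Corollary~\ref{cor:HI0} then provides, for every Right option $G^R$, a Left option $G^{RL}$ with $G^{RL} \geqslant^-_\ab 0$; since $\birth(G^{RL}) < \birth(G)$, the induction hypothesis upgrades each such option to $G^{RL} \geqslant^-_\DD 0$. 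Thus $G$ satisfies, relative to $\DD$, exactly the hypotheses appearing in Lemma~\ref{lem:1step0}: its outcome is $\N$ or $\L$, and every Right option admits a Left option that is $\geqslant^-_\DD 0$.

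It then remains to observe that Lemma~\ref{lem:1step0} holds verbatim with $\ab$ replaced by $\DD$ throughout. The strategy given there has Left, on $G+X$, follow her winning strategy on $X$, answer any Right move in the $G$ component by descending from $G^R$ to some $G^{RL} \geqslant^- 0$, and, once she has no move left in $X$, win the residual $G$-component going first because its outcome is $\N$ or $\L$. The only property of the test game used is that $X$ is dicot, so that exhausting the $X$ component means $X = 0$; binaryness of $X$ is never invoked. Hence the identical argument, now quantifying over all dicot $X$ and descending to options $G^{RL} \geqslant^-_\DD 0$, yields $G \geqslant^-_\DD 0$ and closes the induction.

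I expect the one point deserving care to be precisely this last transfer: verifying that nothing in the strategy of Lemma~\ref{lem:1step0} secretly used the binary structure of the summand, and in particular that the endgame step (when the dicot component is reduced to $0$ while it is Left's turn) survives for an arbitrary dicot $X$. Granting that, one obtains a $\DD$-version of the sufficient condition which, combined with Corollary~\ref{cor:HI0}, gives matching recursive characterisations of $\geqslant^-_\ab 0$ and $\geqslant^-_\DD 0$, and the induction on birthday finishes the proof.
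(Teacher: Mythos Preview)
Your proposal is correct and follows essentially the same approach as the paper: both directions use the inclusion $\ab\subseteq\DD$ for the trivial implication, and for the converse both combine Corollary~\ref{cor:HI0} with the induction hypothesis to upgrade each $G^{RL}\geqslant^-_\ab 0$ to $G^{RL}\geqslant^-_\DD 0$, then run the strategy of Lemma~\ref{lem:1step0} against an arbitrary dicot summand. The only cosmetic difference is that you explicitly isolate ``Lemma~\ref{lem:1step0} holds verbatim with $\ab$ replaced by $\DD$'' as a separate observation, whereas the paper simply inlines that strategy argument inside the induction step; the content is identical.
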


\begin{proof}
As $\ab$ is a subset of $\DD$, we naturally have $G \geqslant^-_\ab 0$ whenever we have $G \geqslant^-_\DD 0$.

Assume now $G \geqslant^-_\ab 0$.
We prove the result by induction on $G$.
If $G = 0$, we have both $G \geqslant^-_\ab 0$ and $G \geqslant^-_\DD 0$.
Now assume $G \neq 0$.
Let $X$ be a dicot game such that Left has a winning strategy on $X$ playing first (respectively second).
On $G+X$, Left can follow the same strategy, until either Right plays on the $G$ component or she has no move available in the $X$ component.
In the first case, Corollary~\ref{cor:HI0} ensures she can answer in the $G$ component by moving from $G^R$ to some $G^{RL}$ with $G^{RL} \geqslant^-_\ab 0$.
By induction, we have $G^{RL} \geqslant^-_\DD 0$, hence Left wins the game a priori.
In the second case, as we have $G \geqslant^-_\ab 0$, $G$ has mis\`ere outcome $\N$ or $\L$, and $X$ has been reduced to $0$, so Left wins playing first in $G+0$.
Hence $G \geqslant^-_\DD 0$.
\end{proof}

We would like here to emphasize the fact that in Theorem~\ref{th:B=>D0}, $G$ ranges in the universe of all games.
In particular, as $G \geqslant^-_\DD H$ implies $G \geqslant^+ H$~\cite{dicotaxo}, we get that only normal $\P$-positions might be equivalent to $0$ modulo $\ab$.

In the following, we somehow extend this result by replacing $0$ by a larger set of games.
Unfortunately, to get there, we also reduce the set in which we choose $G$.
First, we give the following definition.

\begin{dfn}
{\rm
For a game $G$ and an integer $i$, we note $\widetilde{G^{}}^i$ the game given by
$$\widetilde{G^{}}^i = 
\begin{cases}
\{B_i|0\} & \text{ if } G = 0\,, \\
\{\widetilde{(\GR)}^i|0\} & \text{ if } G \neq 0 \text{ and } G \text{ is a Left end,} \\
\{B_i|\widetilde{(\GL)}^i\} & \text{ if } G \neq 0 \text{ and } G \text{ is a Right end,} \\
\{\widetilde{(\GR)}^i|\widetilde{(\GL)}^i\} & \text{ otherwise.}
\end{cases}
$$
}
\end{dfn}

Note that this definition looks quite similar to the definition of the adjoint, and the $\widetilde{G^{}}^i$ games actually share several properties with $G^o$, that we state here, the proofs being similar to the proofs of the similar properties for the adjoint.

\begin{rem}
If $G$ is a binary game, then $\widetilde{G^{}}^i$ is a binary dicot game.
\end{rem}

\begin{lem}
\label{lem:adjiout}
Let $G$ be a binary game.
Then the mis\`ere outcome of $\widetilde{G^{}}^i$ is totally determined by the mis\`ere outcome of $G$, namely:
\begin{enumerate}[(i)]
\item if $o^-(G) = \L$, then $o^-(\widetilde{G^{}}^i) = \L$,
\item if $o^-(G) = \R$, then $o^-(\widetilde{G^{}}^i) = \R$,
\item if $o^-(G) = \N$, then $o^-(\widetilde{G^{}}^i) = \P$,
\item if $o^-(G) = \P$, then $o^-(\widetilde{G^{}}^i) = \N$.
\end{enumerate}
\end{lem}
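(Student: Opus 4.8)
The plan is to mimic, almost verbatim, the induction that proves Lemma~\ref{lem:adjout}, which is exactly the reason the statement is phrased so as to match that lemma's outcome table. I would argue by induction on $G$. Since $G$ is binary, at every position reached each player has at most one available move, so throughout one may speak of \emph{the} Left (resp.\ Right) move, exactly as in the adjoint proof. The goal is to establish the four implications
$$o^-(G)\geqslant\N \;\Rightarrow\; o^-(\widetilde{G^{}}^i)\geqslant\P, \qquad o^-(G)\leqslant\N \;\Rightarrow\; o^-(\widetilde{G^{}}^i)\leqslant\P,$$
$$o^-(G)\geqslant\P \;\Rightarrow\; o^-(\widetilde{G^{}}^i)\geqslant\N, \qquad o^-(G)\leqslant\P \;\Rightarrow\; o^-(\widetilde{G^{}}^i)\leqslant\N,$$
which together pin down $o^-(\widetilde{G^{}}^i)$ from $o^-(G)$ in precisely the four announced cases.

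The key observation is that $\widetilde{G^{}}^i$ differs from the adjoint $G^o$ only in that every Left option equal to $0$ in the definition of $G^o$ is replaced by $B_i$; this happens in the branch where $G$ is a Right end (and, structurally, in the base branch $G=0$, treated separately below). In the proof of Lemma~\ref{lem:adjout}, such a Left move to $0$ is used \emph{only} to conclude that Left, moving first, is immediately refuted, because $0$ is a mis\`ere $\N$-position. By Theorem~\ref{th:GGiR} applied to the zero game (for which $\birth(0)=0$), we have $o^-(B_i)=\R$ for every $i$, so a Left move to $B_i$ is refuted by Right just as well. Hence every inequality in the adjoint proof that relied on Left's move to $0$ being losing transfers unchanged to Left's move to $B_i$, while the Right options and the recursive options of the generic branch are syntactically identical to those of $G^o$; so the inductive step carries over verbatim once the induction hypothesis is invoked on $\GL$ and $\GR$.

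It then remains only to verify the base case $G=0$ on its own, since here the definition departs from the adjoint (where $0^o=*$): we have $\widetilde{0^{}}^i=\{B_i|0\}$, whose Left move leads to the $\R$-position $B_i$ and whose Right move leads to the $\N$-position $0$, so both players lose moving first and $o^-(\widetilde{0^{}}^i)=\P$. This matches item (iii), since $o^-(0)=\N$, and it is exactly the computation already invoked in the statement of Lemma~\ref{lem:carac0}.

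I do not expect a genuine obstacle: the content is entirely the transfer of the adjoint argument, and the single point requiring care is that the substituted Left options $B_i$ still act as refutable moves for Left, which is guaranteed by $o^-(B_i)=\R$ and is what preserves the upper bounds $o^-(\widetilde{G^{}}^i)\leqslant\P$ and $o^-(\widetilde{G^{}}^i)\leqslant\N$. As with the adjoint, binariness is essential and must be used to justify the ``unique move'' phrasing: were $G$ allowed extra options, a losing Left move to $B_i$ could be compensated by another, winning, Left move, breaking those upper bounds — this is the analogue of the failure illustrated after Lemma~\ref{lem:adjout} by $\{0,*|0\}$.
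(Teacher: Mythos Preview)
Your proposal is correct and is exactly the approach the paper intends: the paper does not spell out a proof of Lemma~\ref{lem:adjiout} but explicitly states that it is obtained ``the proofs being similar to the proofs of the similar properties for the adjoint'', i.e.\ by reproducing the induction of Lemma~\ref{lem:adjout}. Your identification of the one point requiring care --- that the substituted Left option $B_i$ still satisfies $o^-(B_i)=\R$ (via Theorem~\ref{th:GGiR} with $G=0$) and hence is a losing first move for Left just as $0$ was --- is precisely the observation that makes the transfer go through, and your separate treatment of the base case $\widetilde{0^{}}^i=\{B_i|0\}$ is appropriate.
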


\begin{prop}
For any game $G$ and any integer $i$ such that $i \geqslant \birth(G)$, $G+\widetilde{G^{}}^i$ has mis\`ere outcome $\P$.
\end{prop}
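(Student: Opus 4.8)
The plan is to follow the proof that $G + G^o$ is a $\P$-position (Proposition~\ref{prop:adjointsum}), since $\widetilde{G^{}}^i$ is built from exactly the same recursive template as the adjoint $G^o$; the only difference is that each Left option equal to $0$ occurring in $G^o$ (the one in the base case $0^o=*$ and the one in the Right-end case) is replaced by $B_i$, while the $0$'s appearing as Right options are kept. I would prove by induction on $\birth(G)$ that, for every $i \geqslant \birth(G)$, the second player wins $G + \widetilde{G^{}}^i$, using the adjoint-style mirroring strategy: whenever the first player moves in one component, the second player answers in the other so as to reach a position of the form $H + \widetilde{H^{}}^i$ with $\birth(H) < \birth(G) \leqslant i$, which is a $\P$-position by the induction hypothesis.

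Concretely, the mirrored responses are the familiar adjoint pairings. A move $G \to G^L$ is answered by $\widetilde{G^{}}^i \to \widetilde{(G^L)}^i$, which is a Right option of $\widetilde{G^{}}^i$ whenever $G$ is not a Left end; a move $G \to G^R$ is answered by $\widetilde{G^{}}^i \to \widetilde{(G^R)}^i$, a Left option whenever $G$ is not a Right end; and a first move inside the $\widetilde{G^{}}^i$ component to an adjoint-type option is answered symmetrically inside $G$. Each such line lands on $G^L + \widetilde{(G^L)}^i$ or $G^R + \widetilde{(G^R)}^i$ with the first player to move, and the induction hypothesis applies because the birthday strictly drops while $i$ stays fixed, so $i \geqslant \birth(G^L)$ and $i \geqslant \birth(G^R)$ still hold.

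The two places where the strategy cannot simply mirror are precisely where $\widetilde{G^{}}^i$ differs from $G^o$, and there Theorem~\ref{th:GGiR} does the work. First, if Left moves into a $B_i$ Left option of $\widetilde{G^{}}^i$ (which exists exactly when $G=0$ or $G$ is a nonzero Right end), the position becomes $G + B_i$; since $i \geqslant \birth(G)$, Theorem~\ref{th:GGiR} gives $o^-(G + B_i) = \R$, so Right, now on move, wins and Left's excursion loses. Second, if $G$ is a Left end and Right plays the unique Right option $\widetilde{G^{}}^i \to 0$, the game becomes $G$ with Left to move; having no move in a Left end, Left wins under the misère convention, so Right's excursion loses. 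The base case $G=0$, where $\widetilde{0}^i = \{B_i|0\}$, is exactly the combination of these two observations.

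I expect the main difficulty to be bookkeeping rather than ideas: one must verify, in each of the four branches of the definition of $\widetilde{G^{}}^i$, that the option the second player wishes to use really is an option on the correct side, and in particular remember that a (nonzero) Left end still offers Right options, so Right's move $G \to G^R$ must also be matched by $\widetilde{G^{}}^i \to \widetilde{(G^R)}^i$. Once every response is checked to be legal and the inequality $i \geqslant \birth(G)$ is seen to pass to all options, the induction closes just as for the adjoint.
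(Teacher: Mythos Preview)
Your proposal is correct and matches the approach the paper intends: the paper does not spell out a proof here, stating only that the argument is ``similar to the proofs of the similar properties for the adjoint,'' and your mirroring strategy together with the appeal to Theorem~\ref{th:GGiR} for the $B_i$ branches is precisely that adaptation. The bookkeeping you flag (checking, branch by branch, that the mirrored option exists on the correct side, and that $i \geqslant \birth(G)$ passes to options) is indeed all that is required.
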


We now prove some intermediate lemmas to get to the proof of Theorem~\ref{th:B=>D}.
They are similar to the lemmas we proved to get to Theorem~\ref{th:B=>D0}, with other sets of games.

\begin{lem}
\label{lem:1step}
Let $G$ and $H$ be games such that 
\begin{enumerate}[1.]
\item if $G$ is a Right end, then the mis\`ere outcome of $H$ is either $\N$ or $\R$.
\item for any Right option $G^R$ of $G$, there exists a Right option $H^R$ with $G^R \geqslant^-_\ab H^R$ or a Left option $G^{RL}$ of $G^R$ with $G^{RL} \geqslant^-_\ab H$,
\item if $H$ is a Left end, then the mis\`ere outcome of $G$ is either $\N$ or $\L$.
\item for every Left option $H^L$ of $H$, there exists a Left option $G^L$ of $G$ with $G^L \geqslant^-_\ab H^L$ or a Right option $H^{LR}$ of $H^L$ with $G \geqslant^-_\ab H^{LR}$.
\end{enumerate}
Then $G \geqslant^-_\ab H$.
\end{lem}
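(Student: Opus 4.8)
The plan is to unfold the definition of $G \geqslant^-_\ab H$ and to show, for \emph{every} binary dicot game $X$, that $o^-(G+X) \geqslant o^-(H+X)$, arguing by induction on $\birth(X)$. The first thing I would record is that the outcome ordering of Figure~\ref{fig:order} splits into two independent bits: for any games $A,B$ we have $o^-(A) \geqslant o^-(B)$ exactly when (Left wins $B$ moving first $\Rightarrow$ Left wins $A$ moving first) and (Left wins $B$ moving second $\Rightarrow$ Left wins $A$ moving second). So it suffices to prove these two implications with $A = G+X$ and $B = H+X$. Throughout I would use the mis\`ere translations ``$o^-\geqslant\N$'' $=$ ``Left wins moving first'' and ``$o^-\geqslant\P$'' $=$ ``Left wins moving second'', noting that since $\N$ and $\P$ are incomparable, $o^-\geqslant\P$ forces the outcome into $\{\P,\L\}$ and $o^-\geqslant\N$ into $\{\N,\L\}$. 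The moves inside $G$ and $H$ will be resolved directly by the hypothesis relations appearing in conditions~1--4, while moves inside $X$ strictly decrease $\birth(X)$ and feed the induction hypothesis.

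For the first implication, suppose Left wins $H+X$ moving first, so she has a winning first move. If it is a move $X\to X^L$, Left copies it in $G+X$, reaching $G+X^L$ with Right to move; since Left wins $H+X^L$ moving second, the induction hypothesis $o^-(G+X^L)\geqslant o^-(H+X^L)$ puts the outcome in $\{\L,\P\}$, so Left wins. If the winning move is $H\to H^L$, I invoke condition~4: either some $G^L \geqslant^-_\ab H^L$ lets Left mirror and cash in that relation, or $H^L$ reverses through an $H^{LR}$ with $G\geqslant^-_\ab H^{LR}$. The symmetric implication, starting from Left winning $H+X$ moving second, is handled the same way: a Right move $X\to X^R$ is dispatched by the induction hypothesis (Left wins $H+X^R$ moving first, hence so in $G+X^R$), and a Right move $G\to G^R$ by condition~2 (mirror to an $H^R$, or reverse through a $G^{RL}\geqslant^-_\ab H$). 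The endgame, where the player to move has no move, reduces to $X=0$ and is exactly where conditions~1 and~3 enter: condition~3 guarantees $o^-(G)\in\{\N,\L\}$ when $H$ is a Left end, and condition~1 rules out the contradictory configuration in which $G$ is a Right end yet Left is assumed to win $H$ moving second (the two outcome sets being disjoint).

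The bulk of the argument is routine lattice bookkeeping; the delicate part, which I expect to be the main obstacle, is the two reversibility sub-cases, namely the second disjuncts of conditions~2 and~4. There Left does \emph{not} imitate the move under consideration: one argues instead that after the opponent's reversing move a position of the form $G^{RL}+X$ or $H^{LR}+X$ is reached whose outcome, via the given relation $G^{RL}\geqslant^-_\ab H$ or $G\geqslant^-_\ab H^{LR}$, already forces a Left win with the correct player to move. Getting the tempo right here---checking that the move spent by the opponent leaves precisely the parity that turns $G^{RL}\geqslant^-_\ab H$ into a win for Left moving second, respectively $G\geqslant^-_\ab H^{LR}$ into a win for Left moving first---is where the mis\`ere conventions (last move loses, no passing) must be tracked carefully, and it is the only place where a naive ``mirror the whole strategy'' argument breaks down.
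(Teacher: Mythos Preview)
Your proposal is correct and takes essentially the same approach as the paper's proof. The only difference is presentational: the paper phrases the argument as ``follow Left's winning strategy on $H+X$ while playing in $G+X$, until either Right plays in $G$, the strategy recommends a move in $H$, or $X$ is reduced to $0$'', whereas you recast this as an explicit induction on $\birth(X)$; the case split (moves in $X$ via induction, moves in $G$ or $H$ via conditions~2 and~4, the no-move situations via conditions~1 and~3) and the treatment of the two reversibility sub-cases are identical in both.
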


\begin{proof}
Let $X$ be a binary dicot game such that Left has a winning strategy on $H+X$ playing first (respectively second).
On $G+X$, Left can follow the same strategy, until either Right plays on the $G$ component from $G$ to some $G^R$, the strategy recommends that she plays on the $H$ component from $H$ to some $H^L$, or the two players reduces $X$ to $0$.
In the first case, she can either consider Right played from $H$ to some $H^R$ with $G^R \geqslant H^R$, or answer in the $G$ component by moving from $G^R$ to some $G^{RL}$ with $G^{RL} \geqslant^-_\ab H$.
In the second case, she can either play in the $G$ component from $G$ to some $G^L$ with $G^L \geqslant^-_\ab H^L$ or consider she moved to $H^L$ and Right answered to some $H^{LR}$ with $G \geqslant^-_\ab H^{LR}$.
In the third case, if it is Right's turn to play, $H$ has outcome $\P$ or $\L$ so $G$ is not a Right end and the same argument as in the first case ensures Left wins.
Assume then it is Left's turn to play.
If $H$ is not a Left end, the same argument as in the second case ensures Left wins.
Otherwise, as the mis\`ere outcome of $G$ is either $\N$ or $\L$, Left wins a priori.
Hence $G \geqslant^-_\ab H$.
\end{proof}

Using Lemma~\ref{lem:1step}, we can give a characterisation of games greater than or equivalent to a binary game $H$ modulo $\ab$, given that no follower of $H$ has outcome $\L$.

\begin{lem}
\label{lem:carac}
Let $G$ be a dicot game, $i$ an integer with $i \geqslant {\rm max}(\birth(G),\birth(H))$ and $H$ a binary game such that no follower of $H$ has outcome $\L$.
Then we have $G \geqslant^-_\ab H$ if and only if Left has a winning strategy on $G+\widetilde{H^{}}^i$ playing second.
\end{lem}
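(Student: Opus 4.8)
The plan is to mirror the proof of Lemma~\ref{lem:carac0}, with $0$ replaced by $H$ and $\{B_i|0\}$ replaced by $\widetilde{H^{}}^i$, treating the two implications separately. For the forward implication I would argue exactly as there: by the preceding remark $\widetilde{H^{}}^i$ is a binary dicot game (as $H$ is binary), and by the preceding proposition $o^-(H + \widetilde{H^{}}^i) = \P$ (as $i \geqslant \birth(H)$). Hence if $G \geqslant^-_\ab H$, then $o^-(G + \widetilde{H^{}}^i) \geqslant o^-(H + \widetilde{H^{}}^i) = \P$, so Left wins $G + \widetilde{H^{}}^i$ playing second.

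For the converse I would assume Left wins $G + \widetilde{H^{}}^i$ playing second and prove $G \geqslant^-_\ab H$ by induction on $\birth(G) + \birth(H)$, the goal being to verify the four hypotheses of Lemma~\ref{lem:1step}. The structural observation driving the argument is that the Left (resp. Right) options of $\widetilde{H^{}}^i$ are exactly the games $\widetilde{(H^R)}^i$ for $H^R$ a Right option of $H$ (resp. $\widetilde{(H^L)}^i$ for $H^L$ a Left option of $H$), except that the single Left option is $B_i$ when $H$ has no Right option and the single Right option is $0$ when $H$ has no Left option. Each hypothesis is then read off by considering a Right first move in $G + \widetilde{H^{}}^i$ and Left's winning response. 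For hypothesis 2, a Right move $G \to G^R$ yields $G^R + \widetilde{H^{}}^i$ where Left wins; her winning move is either to some $G^{RL} + \widetilde{H^{}}^i$, giving $G^{RL} \geqslant^-_\ab H$ by induction, or to some $G^R + \widetilde{(H^R)}^i$, giving $G^R \geqslant^-_\ab H^R$ by induction — a move to $G^R + B_i$ is impossible since its outcome is $\R$ by Theorem~\ref{th:GGiR}. Symmetrically, for hypothesis 4, a Right move $\widetilde{H^{}}^i \to \widetilde{(H^L)}^i$ yields $G + \widetilde{(H^L)}^i$, whose winning response gives $G^L \geqslant^-_\ab H^L$ or $G \geqslant^-_\ab H^{LR}$, again discarding the losing move to $G + B_i$ by Theorem~\ref{th:GGiR}.

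The two end-conditions exploit that $G$ is dicot. Since a nonzero dicot has both a Left and a Right option, $G$ being a Right end forces $G = 0$; then Left winning $\widetilde{H^{}}^i$ playing second gives $o^-(\widetilde{H^{}}^i) \in \{\P,\L\}$, so $o^-(H) \in \{\N,\L\}$ by Lemma~\ref{lem:adjiout}, and since no follower of $H$ (in particular $H$ itself) has outcome $\L$ we obtain $o^-(H) = \N$, establishing hypothesis 1. For hypothesis 3, if $H$ is a Left end then $0$ is a Right option of $\widetilde{H^{}}^i$, so Right can move to $G + 0 = G$ with Left to play and winning, whence $o^-(G) \in \{\N,\L\}$. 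With all four hypotheses in hand, Lemma~\ref{lem:1step} yields $G \geqslant^-_\ab H$.

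The part to watch is the bookkeeping of the induction rather than any single hard step: in every case above one of $G,H$ is replaced by a strict follower while the other does not grow, so $\birth(G)+\birth(H)$ strictly decreases and the induction hypothesis applies, its side conditions ($G$ dicot, $H$ binary with no follower of outcome $\L$, and $i \geqslant \max(\birth(G),\birth(H))$) all being inherited by followers and by the same $i$. The repeated appeals to Theorem~\ref{th:GGiR}, which rule out the spurious moves into a $B_i$ component, are exactly what confines Left's winning responses to the options of $\widetilde{H^{}}^i$ that match options of $H$, and thus what makes the reduction to Lemma~\ref{lem:1step} go through.
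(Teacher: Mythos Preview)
Your proposal is correct and follows essentially the same approach as the paper's own proof: both establish the forward direction from $o^-(H+\widetilde{H^{}}^i)=\P$, and for the converse both run an induction that verifies the four hypotheses of Lemma~\ref{lem:1step} by reading off Left's winning responses to each Right first move in $G+\widetilde{H^{}}^i$, using Theorem~\ref{th:GGiR} to discard the spurious moves into a $B_i$ component and Lemma~\ref{lem:adjiout} together with the ``no follower of outcome $\L$'' hypothesis to handle the Right-end case $G=0$. Your explicit tracking of the induction measure and of the inheritance of the side conditions is a welcome clarification, but the argument is the same.
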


\begin{proof}
As $\widetilde{H^{}}^i$ is a binary dicot game such that $H+\widetilde{H^{}}^i$ has mis\`ere outcome $\P$, if $G \geqslant^-_\ab H$, then Left has a winning strategy on $G+\widetilde{H^{}}^i$ playing second.

Assume now Left has a winning strategy on $G + \widetilde{H^{}}^i$ playing second.
Left's move to any follower of $G$ summed with $B_i$ is always losing, which is why we never consider it among potential winning moves in this proof.
We prove the result by induction on $G$ and $H$.
If $H$ is a Left end, as Right can move from $G + \widetilde{H^{}}^i$ to $G$, the mis\`ere outcome of $G$ is either $\N$ or $\L$.
If $G$ is a Right end, as $G$ is dicot, we have $G=0$, hence as Left can win $G+\widetilde{H^{}}^i = \widetilde{H^{}}^i$ playing second, $\widetilde{H^{}}^i$ has outcome $\P$ or $\L$, which means $H$ has outcome $\N$ or $\L$ by Lemma~\ref{lem:adjiout}.
As we assumed $H$ does not have outcome $\L$, $H$ has outcome $\N$.

Assume first Right plays from $G + \widetilde{H^{}}^i$ to some $G^R + \widetilde{H^{}}^i$.
Then Left has a winning answer, which is either to some $G^{RL} + \widetilde{H^{}}^i$ or to some $G^R + \widetilde{H^R}^i$.
In the first case, there exists a Left option $G^{RL}$ of $G^R$ such that Left has a winning strategy on $G^{RL}+\widetilde{H^{}}^i$ playing second.
By induction, we have $G^{RL} \geqslant^-_\ab H$.
In the second case, there exists a Right option $H^R$ such that Left has a winning strategy on $G^R + \widetilde{H^R}^i$ playing second.
By induction, we have $G^R \geqslant^-_\ab H^R$.
Hence for any Right option $G^R$ of $G$, there exists a Right option $H^R$ of $H$ such that $G^R \geqslant^-_\ab H^R$ or a Left option $G^{RL}$ of $G^R$ with $G^{RL} \geqslant^-_\ab H$.

Assume now Right plays from $G + \widetilde{H^{}}^i$ to some $G + \widetilde{H^L}^i$.
Then Left has a winning answer, which is either to $G^L + \widetilde{H^L}^i$ or to $G + \widetilde{H^{LR}}^i$.
With a reasonment similar to the previous paragraph, we get that for any Left option $H^L$ of $H$, there exists a Left option $G^L$ of $G$ with $G^L \geqslant^-_\ab H^L$ or a Right option $H^{LR}$ of $H^L$ with $G \geqslant^-_\ab H^{LR}$.

Then by Lemma~\ref{lem:1step}, $G \geqslant^-_\ab H$.
\end{proof}

Here, we added the extra condition that $G$ needs to be dicot.
The problem is we cannot deal with Right ends which are not $0$, and as the proof is by induction we only consider dicot games.
To see that the result becomes false when you remove the dicot condition, consider $G=1$, $i=1$ and $H=*$.
Then Left has a winning strategy playing second in $1 + \widetilde{*^{}}^1$, but $1 \ngeqslant^-_\ab *$ as $o^-(0+1) = \R$ and $o^-(0+*) = \P$.

We also added the condition that $H$ needs to be binary, and again, this condition cannot be removed: 
consider $G=0$, $i=0$ and $H=\big\{\{0|0,*\}\big|0\big\}$.
Then as $o^-(0) = \N$, $o^-(H) = \P$ and $o^-(\widetilde{H^{}}^0) = \L$, we have $0$ and $H$ incomparable modulo $\ab$ though Left has a winning strategy playing second in $0 + \widetilde{H^{}}^0$ and $0 \geqslant \birth(0)$.

The third condition we added is that $H$ has no follower with outcome $\L$, which again cannot be removed:
consider $G=0$, $i=0$ and $H = Z = \big\{\{*|\{*|0\}\}\big|*\big\}$.
We saw in Proposition~\ref{prop:Z>0} that $Z >^-_\ab 0$, hence we cannot have $0 \geqslant^-_\ab Z$, whereas Left has a winning strategy on $0 + \widetilde{Z^{}}^0$ playing second as $\widetilde{Z^{}}^0$ has outcome $\P$.

The proof of Lemma~\ref{lem:carac} has for immediate consequence the converse of Lemma~\ref{lem:1step}, with the additional hypothesis that $G$ is dicot and $H$ is binary with no follower having outcome $\L$.

\begin{cor}
\label{cor:HI}
Let $G$ be a dicot game and $H$ a binary game such that $G \geqslant^-_\ab H$ and $H$ has no follower with outcome $\L$.
Then
\begin{enumerate}[1.]
\item if $G$ is a Right end, then the mis\`ere outcome of $H$ is either $\N$ or $\R$.
\item for any Right option $G^R$ of $G$, there exists a Right option $H^R$ with $G^R \geqslant^-_\ab H^R$ or a Left option $G^{RL}$ of $G^R$ with $G^{RL} \geqslant^-_\ab H$,
\item if $H$ is a Left end, then the mis\`ere outcome of $G$ is either $\N$ or $\L$.
\item for every Left option $H^L$ of $H$, there exists a Left option $G^L$ of $G$ with $G^L \geqslant^-_\ab H^L$ or a Right option $H^{LR}$ of $H^L$ with $G \geqslant^-_\ab H^{LR}$.
\end{enumerate}
\end{cor}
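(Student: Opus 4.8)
The plan is to derive the corollary by chaining the two directions of Lemma~\ref{lem:carac}, reading conditions 1--4 directly out of the case analysis already performed in its proof. Fix an integer $i \geqslant \max(\birth(G),\birth(H))$. Because $G \geqslant^-_\ab H$, and because $\widetilde{H^{}}^i$ is a binary dicot game with $o^-(H+\widetilde{H^{}}^i)=\P$, applying the definition of $\geqslant^-_\ab$ to the test game $X=\widetilde{H^{}}^i$ (this is exactly the ``only if'' direction of Lemma~\ref{lem:carac}) shows that Left has a winning strategy on $G+\widetilde{H^{}}^i$ playing second. The standing hypotheses make this legitimate: $G$ is dicot, $H$ is binary, and no follower of $H$ has outcome $\L$.

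The substance is then to re-run the ``if'' direction of the proof of Lemma~\ref{lem:carac}, which extracts precisely conditions 1--4 from the single fact that Left wins $G+\widetilde{H^{}}^i$ playing second, before that proof finally invokes Lemma~\ref{lem:1step}. For condition 1, if $G$ is a Right end then $G=0$ since $G$ is dicot, so Left winning $\widetilde{H^{}}^i$ second forces $o^-(\widetilde{H^{}}^i)\in\{\P,\L\}$; by Lemma~\ref{lem:adjiout} this gives $o^-(H)\in\{\N,\L\}$, and the no-$\L$-follower hypothesis leaves $o^-(H)=\N$. For condition 3, if $H$ is a Left end then Right can move $G+\widetilde{H^{}}^i$ to $G$, and Left (to move) must win, so $o^-(G)\in\{\N,\L\}$. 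For condition 2, consider Right's move to $G^R+\widetilde{H^{}}^i$: Left's reply into any $B_i$-summand loses by Theorem~\ref{th:GGiR}, so her winning reply is either $G^{RL}+\widetilde{H^{}}^i$ or $G^R+\widetilde{H^R}^i$, each a position Left wins playing second; applying Lemma~\ref{lem:carac} to these smaller instances yields $G^{RL}\geqslant^-_\ab H$ or $G^R\geqslant^-_\ab H^R$. Condition 4 is the mirror image, read off from Right's move to $G+\widetilde{H^L}^i$, producing $G^L\geqslant^-_\ab H^L$ or $G\geqslant^-_\ab H^{LR}$.

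I expect the only delicate point to be checking that the sub-positions invoked in the last two conditions genuinely satisfy the three hypotheses required to reapply Lemma~\ref{lem:carac}, so that the appeals are valid. This is routine bookkeeping: every follower of the dicot game $G$ is again dicot, while every follower of the binary game $H$ is again binary and, since $H$ has no follower of outcome $\L$, shares that property; moreover $\birth(H^R),\birth(H^L)<\birth(H)\leqslant i$, so the same $i$ remains admissible. Hence each application of Lemma~\ref{lem:carac} is justified, conditions 1--4 all hold, and the corollary follows.
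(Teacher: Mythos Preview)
Your proposal is correct and follows exactly the paper's intended approach: the paper states only that the corollary is an immediate consequence of the proof of Lemma~\ref{lem:carac}, and you have faithfully unpacked that remark by feeding $G \geqslant^-_\ab H$ through the ``only if'' direction to obtain a second-player win for Left on $G+\widetilde{H^{}}^i$, and then reading conditions 1--4 out of the case analysis in the ``if'' direction. Your bookkeeping on the hypotheses (dicot/binary closure under followers, persistence of the no-$\L$-follower condition, and adequacy of the same $i$) is exactly what is needed to make the sub-applications of Lemma~\ref{lem:carac} legitimate.
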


We are now in position to state Theorem~\ref{th:B=>D}.

\begin{theo}
\label{th:B=>D}
Let $G$ be a dicot game and $H$ a binary game with no follower having outcome $\L$.
Then we have $G \geqslant^-_\DD H$ if and only if we have $G \geqslant^-_\ab H$.
\end{theo}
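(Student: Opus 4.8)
My plan is to follow the template of Theorem~\ref{th:B=>D0} (which is exactly the special case $H=0$), upgraded with the structural machinery of Lemma~\ref{lem:1step} and Corollary~\ref{cor:HI}. The easy direction is immediate: since $\ab \subseteq \DD$, every $X \in \ab$ lies in $\DD$, so $G \geqslant^-_\DD H$ forces $G \geqslant^-_\ab H$. For the converse I would assume $G \geqslant^-_\ab H$ and prove $G \geqslant^-_\DD H$ by induction on $\birth(G)+\birth(H)$.

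First I would apply Corollary~\ref{cor:HI} to the hypothesis $G \geqslant^-_\ab H$; this is legitimate precisely because $G$ is dicot and $H$ is binary with no follower of outcome $\L$, and it yields the four conditions of Lemma~\ref{lem:1step}: the two end-outcome constraints on $G$ and $H$, and, for each Right option $G^R$, a matching $H^R$ with $G^R \geqslant^-_\ab H^R$ or a Left option $G^{RL}$ with $G^{RL} \geqslant^-_\ab H$, together with the dual statement for each Left option $H^L$. Then I would take an arbitrary dicot game $X$ on which Left wins $H+X$ moving first (respectively second) and transplant her strategy to $G+X$ exactly as in the proof of Lemma~\ref{lem:1step}. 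The one change is that wherever that proof invokes an $\ab$-comparison between smaller games, namely $G^R \geqslant^-_\ab H^R$, $G^{RL}\geqslant^-_\ab H$, $G^L \geqslant^-_\ab H^L$, or $G \geqslant^-_\ab H^{LR}$, I would instead invoke the induction hypothesis to upgrade it to the corresponding $\DD$-comparison; this is what is needed because the residual position $X'$ reached is dicot (followers of a dicot game are dicot). The induction hypothesis applies to each such pair because the relevant option of $G$ remains dicot, the relevant option of $H$ remains binary with no $\L$-follower, and the birthday total strictly drops, for instance $\birth(G^{RL}) < \birth(G)$ with $H$ fixed and $\birth(H^{LR}) < \birth(H)$ with $G$ fixed.

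The only place needing genuine care beyond this bookkeeping is the terminal case in which $X$ has been reduced to $0$. If it is then Right's turn, the fact that Left was winning $H+X$ forces $o^-(H) \in \{\P,\L\}$, and since $H$ has no $\L$-follower this gives $o^-(H)=\P$; condition~1 of Corollary~\ref{cor:HI}, contrapositively, then guarantees $G$ is not a Right end, so Left answers via condition~2. If it is Left's turn, she wins directly through a Left move of $G$ when $H$ admits a Left option (condition~4), and when $H$ is a Left end, condition~3 gives $o^-(G) \in \{\N,\L\}$, so Left wins $G+0$ moving first. This endgame analysis is essentially the one already performed inside Lemma~\ref{lem:1step}, and it is the decisive use both of the dicotness of $G$ (so the only relevant Right end is $0$) and of the assumption that no follower of $H$ has outcome $\L$.

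I expect the main obstacle to be purely the hypothesis-tracking: verifying that dicotness of $G$ and binariness together with the hereditary no-$\L$-follower property of $H$ are preserved along every branch of the induction, so that both Corollary~\ref{cor:HI} and the induction hypothesis may legitimately be applied at each step. Once that invariance is checked, the strategy transplant and the endgame cases go through verbatim from Lemma~\ref{lem:1step}, with $\ab$ replaced by $\DD$ in each comparison of strictly smaller games.
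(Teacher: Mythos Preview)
Your proposal is correct and follows essentially the same approach as the paper: invoke Corollary~\ref{cor:HI} on the hypothesis $G\geqslant^-_\ab H$, transplant Left's strategy from $H+X$ to $G+X$ for dicot $X$, and upgrade each resulting $\ab$-comparison to a $\DD$-comparison by induction on the pair $(G,H)$. The endgame analysis when $X$ reaches $0$ is handled the same way in both arguments; your extra remark that $o^-(H)=\P$ (rather than just $\P$ or $\L$) is harmless but not needed, and your phrase ``she wins directly through a Left move of $G$'' in the $H$-not-a-Left-end subcase should be read, as in the paper, to include the alternative branch $G\geqslant^-_\ab H^{LR}$ supplied by condition~4.
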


\begin{proof}
As $\ab$ is a subset of $\DD$, we naturally have $G \geqslant^-_\ab H$ whenever we have $G \geqslant^-_\DD H$.

Assume now $G \geqslant^-_\ab H$.
We prove the result by induction on $G$ and $H$.
Let $X$ be a dicot game such that Left has a winning strategy on $H+X$ playing first (respectively second).
On $G+X$, Left can follow the same strategy, until either Right plays on the $G$ component from $G$ to some $G^R$ or the strategy recommends that she plays on the $H$ component from $H$ to some $H^L$, or the players reduce the $X$ component to $0$.
In the first case, Corollary~\ref{cor:HI} ensures she can answer in the $G$ component by moving from $G^R$ to some $G^{RL}$ with $G^{RL} \geqslant^-_\ab H$ or consider Right moved in the $H$ component from $H$ to some $H^R$ with $G^R \geqslant^-_\ab H^R$.
By induction, we have $G^{RL} \geqslant^-_\DD H$ or $G^R \geqslant^-_\DD H^R$, hence Left wins the game a priori.
In the second case, Corollary~\ref{cor:HI} ensures she can play to some $G^L$ with $G^L \geqslant^-_\ab H^L$ or consider Right moved in the $H$ component from $H^L$ to some $H^{LR}$ with $G \geqslant^-_\ab H^{LR}$.
By induction, we have $G^L \geqslant^-_\DD H^L$ or $G \geqslant^-_\DD H^{LR}$, hence Left wins the game a priori.
In the third case, if it is Right's turn to play, $H$ has outcome $\P$ or $\L$ so $G$ is not a Right end by Corollary~\ref{cor:HI} and the same argument as in the first case ensures Left wins.
Assume then it is Left's turn to play.
If $H$ is not a Left end, the same argument as in the second case ensures Left wins.
Otherwise, as the mis\`ere outcome of $G$ is either $\N$ or $\L$ by Corollary~\ref{cor:HI}, Left wins a priori.
Hence $G \geqslant^-_\DD H$.
\end{proof}

Though Lemma~\ref{lem:carac} cannot be extended by choosing $G$ in all games, nor by choosing $H$ to all dicot games, we might still hope to extend Theorem~\ref{th:B=>D}.
In particular, it would be really interesting to know whether having $G \geqslant^-_\ab H$ implies $G \geqslant^-_\DD H$ when $G$ and $H$ are both dicot games since equivalence modulo dicot games is mostly used between dicot games, but having a similar result considering all games would still give even more meaning to the set $\ab$.
We here present a proof of a similar result when $G$ and $H$ are both binary, using a slightly different method.
We need some results from~\cite{dicotaxo} and~\cite{canonical}, that we recall here, together with some counterparts of other results from these papers adapted to binary dicot games.

We first recall the following proposition.

\begin{prop}[Dorbec et al.~\cite{dicotaxo}]
\label{21}
Let $\mathcal{U}$ be a set of games, $G$ and $H$ two games (not necessarily in $\UU$).
We have $G \geqslant^-_\UU H$ if and only if the following two conditions hold:
\begin{enumerate}[(i)]
\item For all $X \in \mathcal{U}$ with $o^-(H+X) \geqslant \P$, we have $o^-(G+X) \geqslant \P$; and
\item For all $X \in \mathcal{U}$ with $o^-(H+X) \geqslant \N$, we have $o^-(G+X) \geqslant \N$.
\end{enumerate}
\end{prop}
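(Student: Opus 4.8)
The plan is to unwind the definition of $\geqslant^-_\UU$ pointwise in $X$ and reduce everything to the order structure of the four-element outcome poset of Figure~\ref{fig:order}. The key feature of that poset is that $\L$ is the unique element lying above both $\N$ and $\P$, while $\R$ lies below everything; the relations $o^-(\cdot) \geqslant \P$ and $o^-(\cdot) \geqslant \N$ single out the up-sets $\{\P,\L\}$ and $\{\N,\L\}$ respectively, whose intersection is exactly $\{\L\}$. This observation is what makes the two conditions (i) and (ii) together equivalent to the single relation $\geqslant^-_\UU$.

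For the forward direction, suppose $G \geqslant^-_\UU H$, so that $o^-(G+X) \geqslant o^-(H+X)$ for every $X \in \UU$. If $X \in \UU$ satisfies $o^-(H+X) \geqslant \P$, then $o^-(G+X) \geqslant o^-(H+X) \geqslant \P$ by transitivity, which is condition (i); condition (ii) follows identically with $\N$ in place of $\P$. This direction uses nothing beyond transitivity of the order on outcomes.

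The converse is the substantive part. Assuming (i) and (ii), I would fix an arbitrary $X \in \UU$ and establish $o^-(G+X) \geqslant o^-(H+X)$ by a case split on the value of $o^-(H+X)$. If $o^-(H+X) = \R$ there is nothing to prove, since $\R$ is the bottom of the poset. If $o^-(H+X) = \N$, then $o^-(H+X) \geqslant \N$, so (ii) gives $o^-(G+X) \geqslant \N = o^-(H+X)$; the symmetric case $o^-(H+X) = \P$ is handled by (i) in the same way.

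The one case that genuinely requires both hypotheses at once is $o^-(H+X) = \L$, and this is the step I expect to be the crux. Here $o^-(H+X) \geqslant \P$ and $o^-(H+X) \geqslant \N$ simultaneously, so (i) forces $o^-(G+X) \in \{\P,\L\}$ while (ii) forces $o^-(G+X) \in \{\N,\L\}$; intersecting these two up-sets yields $o^-(G+X) = \L = o^-(H+X)$. This is precisely where the specific shape of the outcome poset does the work, since it is the fact that $\L$ is the unique outcome above both $\N$ and $\P$ that pins the value down and explains why a single condition would not suffice. Having dealt with all four possible values of $o^-(H+X)$, we obtain $o^-(G+X) \geqslant o^-(H+X)$ for every $X \in \UU$, that is, $G \geqslant^-_\UU H$.
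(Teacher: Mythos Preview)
Your proof is correct. Note, however, that the paper does not actually supply a proof of this proposition: it is quoted as a result of Dorbec et al.~\cite{dicotaxo} and used as a tool, so there is no in-paper argument to compare against. Your argument is the standard one for this kind of statement --- the forward direction is immediate from transitivity, and for the converse the four-case analysis on $o^-(H+X)$ together with the observation that $\{\P,\L\}\cap\{\N,\L\}=\{\L\}$ is exactly what is needed. There is nothing missing or to correct.
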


We can now adapt the following lemma to binary dicot games, being careful about the construction staying in $\ab$, in particular by only considering binary games.

\begin{lem}
\label{34}
Let $G$ and $H$ be any binary games.
If $G \ngeqslant^-_\ab H$, then: 
\begin{enumerate}[(a)]
\item There exists some $Y \in \ab$ such that $o^-(G+Y) \leqslant \P$ and $o^-(H+Y) \geqslant \N$; and 
\item There exists some $Z \in \ab$ such that $o^-(G+Z) \leqslant \N$ and $o^-(H+Z) \geqslant \P$.
\end{enumerate}
\end{lem}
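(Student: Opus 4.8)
The plan is to reduce the statement to Proposition~\ref{21} and then manufacture the two distinguishing games by a tempo construction that never leaves $\ab$. Since $G \ngeqslant^-_\ab H$, Proposition~\ref{21} says that at least one of its two conditions fails. If condition (ii) fails there is some $X \in \ab$ with $o^-(H+X) \geqslant \N$ and $o^-(G+X) \not\geqslant \N$, that is $o^-(G+X) \leqslant \P$; this $X$ is exactly a game $Y$ as required in (a). Symmetrically, the failure of condition (i) hands us directly a game $Z$ as in (b). So the genuine content of the lemma is not that \emph{some} distinguishing game exists, but that \emph{both} (a) and (b) must hold simultaneously; equivalently, that from a witness of one type one can always build a witness of the other type while staying in $\ab$.

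I would organize the argument around conjugation so as to prove only one such production. Because $\ab$ is closed under conjugates, we have $G \geqslant^-_\ab H \Leftrightarrow \overline{H} \geqslant^-_\ab \overline{G}$, and because conjugation reverses the outcome order while fixing $\P$ and $\N$, statement (a) for the pair $(G,H)$ is equivalent to statement (b) for the pair $(\overline{H},\overline{G})$. Hence it suffices to prove the single implication: whenever $W \in \ab$ satisfies $o^-(G+W) \leqslant \N$ and $o^-(H+W) \geqslant \P$ (a witness of type (b)), one can exhibit a witness of type (a); applying this implication to $(\overline{H},\overline{G})$ then yields the reverse production for free, and Proposition~\ref{21} closes the loop.

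For the construction I would use the doubling $W \mapsto \{W|W\}$ in place of the sum $W+*$ that one would use in the full dicot universe of~\cite{dicotaxo,canonical}: if $W$ is binary dicot then so is $\{W|W\}$, whereas $W+*$ need not be binary, and this substitution is precisely the ``care'' needed to remain in $\ab$. The useful feature is that both players may collapse $\{W|W\}$ to $W$. This single observation yields the easy half: from $o^-(H+W)\geqslant\P$ one gets $o^-(H+\{W|W\})\geqslant\N$, since Left collapses to $H+W$, leaving Right to move in a $\geqslant\P$ position, which Right loses; this is the $\N$-side of the desired witness $Y=\{W|W\}$ for (a). The delicate half is to deduce $o^-(G+\{W|W\})\leqslant\P$ from $o^-(G+W)\leqslant\N$: Left's collapsing move to $G+W$ is indeed losing for her, but she may instead \emph{decline} the collapse and move in the $G$ component to $G^L+\{W|W\}$, so the bound is not immediate.

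I expect this declined-collapse case to be the main obstacle, exactly as in the source lemmas. It is here that the hypothesis that $G$ is \emph{binary} is essential: there is at most one escape $G^L$, so the branching stays under control, and I would close the gap by induction on $\birth(G)$ and $\birth(H)$, absorbing the position $G^L+\{W|W\}$ through the inductive hypothesis applied to the follower $G^L$ (refining the witness $W$ as needed so that the induction delivers the required outcome bound on $G^L+\{W|W\}$). The only extra bookkeeping beyond the universe proofs of~\cite{dicotaxo,canonical} is to check that every auxiliary game produced along the way is again binary dicot, so that the final $Y$ and $Z$ genuinely lie in $\ab$; this is automatic for the doubling operation and for the followers of binary games.
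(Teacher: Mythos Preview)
Your reduction via Proposition~\ref{21} and the conjugation symmetry is correct, and the easy half of your doubling construction is fine. The gap is exactly where you flag it, and your proposed patch does not close it.

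Suppose $W$ witnesses (b) and you set $Y=\{W|W\}$. To get $o^-(G+Y)\leqslant\P$ you must defeat Left's escape to $G^L+\{W|W\}$. Your plan is to invoke the inductive hypothesis on the follower $G^L$; but the inductive hypothesis reads ``if $G^L\ngeqslant^-_\ab H$ then \ldots'', and nothing rules out $G^L\geqslant^-_\ab H$. Indeed this can happen (take any $G$ whose failure to dominate $H$ is caused by a bad Right option while $G^L$ is large), and then no refinement of $W$ exists at all for the pair $(G^L,H)$. The phrase ``refining the witness $W$ as needed'' is therefore not an argument: there is no induction-produced witness to refine, and even if there were you would need a single $Y$ whose Right option simultaneously handles the collapse and the escape, which the symmetric choice $\{W|W\}$ cannot do.

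The paper's construction sidesteps this entirely: for the direction (b)$\Rightarrow$(a) it takes $Y=\{Z\mid (G^L)^o\}$ (or $\{Z\mid 0\}$ when $G$ is a Left end). The Left option $Z$ gives Left her winning collapse on the $H$ side; the Right option $(G^L)^o$ is chosen precisely so that if Left escapes to $G^L+Y$, Right replies to $G^L+(G^L)^o$, a $\P$-position by Proposition~\ref{prop:adjointsum}. This is where the binary hypothesis on $G$ is actually used: there is at most one $G^L$, so $(G^L)^o$ is a single game and $Y$ remains binary (and dicot, since the adjoint of a binary game is binary dicot). No induction is needed.
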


\begin{proof}
Negating the condition of Proposition~\ref{21}, we get that (a) or (b) must hold. To prove the lemma, we show that (a) $\Rightarrow$ (b) and (b) $\Rightarrow$ (a).

Consider some $Y \in \ab$ such that $o^-(G+Y) \leqslant \P$ and $o^-(H+Y) \geqslant \N$, and set
$$
Z =
\begin{cases}
\{0|Y\} & \text{ if } H \text{ is a Right end} \\
\{(H^R)^o|Y\} & \text{ otherwise.}
\end{cases}
$$
First note that since $Z$ has both a Left and a Right option, and both these options are binary dicot, $Z$ is also binary dicot.
We now show that $Z$ satisfies $o^-(G+Z) \leqslant \N$ and $o^-(H+Z) \geqslant \P$, as required in (b).
From the game $G+Z$, Right has a winning move to $G+Y$, so $o^-(G+Z) \leqslant \N$.
We now prove that Right has no winning move in the game $H+Z$.
Observe first that $H+Z$ is not a Right end since $Z$ is not.
If Right moves to $H^R+Z$, Left has a winning response to $H^R + (H^R)^o$.
If instead Right moves to $H+Y$ then, since $o^-(H+Y) \geqslant \N$, Left wins a priori.
Therefore $o^-(H+Z) \geqslant \P$, and (a) $\Rightarrow$ (b).

To prove (b) $\Rightarrow$ (a), for a given $Z$ we set $Y = \{Z|0\}$ when $G$ is a Left end and $\{Z|(G^L)^o\}$ when $G$ is not a Left end and prove similarly that Left wins if she plays first on $H+Y$ and loses if she plays first on $G+Y$.
\end{proof}

We now recall the following definition and lemma, that will be useful in the following.

\begin{dfn}[Siegel~\cite{canonical}]
{\rm
Let $G$ and $H$ be any two games and $\UU$ a set of games.
If there exists some $T \in \UU$ such that $o^-(G+T) \leqslant \P \leqslant o^-(H+T)$, we say that $G$ is $\UU$-downlinked to $H$ (by $T$).
In that case, we also say that $H$ is $\UU$-uplinked to $G$ by $T$.
}
\end{dfn}

\begin{lem}[Dorbec et al.~\cite{dicotaxo}]
\label{53}
Let $G$ and $H$ be any two games and $\UU$ be a set of games.
If $G \geqslant^-_{\mathcal{U}} H$, then $G$ is $\mathcal{U}$-downlinked to no $H^L$ and no $G^R$ is $\mathcal{U}$-downlinked to $H$.
\end{lem}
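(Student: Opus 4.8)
The plan is to prove both assertions by contradiction, in each case feeding the witnessing position $T$ --- which lies in $\UU$ --- directly into the defining inequality of $G \geqslant^-_\UU H$. Throughout I would use the dictionary between outcomes and winning players read off from Figure~\ref{fig:order}: Left wins moving second exactly when $o^- \geqslant \P$, Left wins moving first exactly when $o^- \geqslant \N$, Right wins moving second exactly when $o^- \leqslant \P$, and Right wins moving first exactly when $o^- \leqslant \N$. The two statements are mirror images of one another, so I would treat the first carefully and obtain the second by conjugation.

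For the first assertion, I would suppose toward a contradiction that $G$ is $\UU$-downlinked to some Left option $H^L$ by a position $T \in \UU$, which means $o^-(G+T) \leqslant \P \leqslant o^-(H^L+T)$. Since $T \in \UU$, the hypothesis $G \geqslant^-_\UU H$ gives $o^-(H+T) \leqslant o^-(G+T) \leqslant \P$. On the other hand, from $H+T$ with Left to move, Left may move in the $H$ component to $H^L+T$; as $o^-(H^L+T) \geqslant \P$, Left then wins moving second, so this is a winning first move and $o^-(H+T) \geqslant \N$. But no outcome lies in both $\{\R,\P\}$ and $\{\N,\L\}$, so $o^-(H+T) \leqslant \P$ and $o^-(H+T) \geqslant \N$ cannot both hold, which is the contradiction I seek.

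For the second assertion I would run the conjugate argument: assuming that a Right option $G^R$ is $\UU$-downlinked to $H$ by some $T \in \UU$, I have $o^-(G^R+T) \leqslant \P \leqslant o^-(H+T)$, whence $o^-(G+T) \geqslant o^-(H+T) \geqslant \P$ by the hypothesis; then from $G+T$ with Right to move, Right moves to $G^R+T$, where $o^-(G^R+T) \leqslant \P$ lets Right win moving second, forcing $o^-(G+T) \leqslant \N$ and again a contradiction. The one place I would check most carefully is the bookkeeping that pairs each outcome inequality with the correct ``who moves next / who wins'' statement and keeps track of the fact that $H^L$ is reached by a Left move whereas $G^R$ is reached by a Right move; this is the only real subtlety, since once the dictionary is fixed both contradictions are immediate from the disjointness of the up-set and the down-set of $\P$ in the outcome lattice.
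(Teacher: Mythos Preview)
Your proof is correct. Note, however, that the paper does not actually supply a proof of this lemma: it is quoted as a known result from Dorbec et al.~\cite{dicotaxo}, so there is no ``paper's own proof'' to compare against here. Your argument by contradiction --- feeding the downlinking witness $T$ into the defining inequality $o^-(G+T) \geqslant o^-(H+T)$ and then observing that the first move to $H^L+T$ (respectively $G^R+T$) forces an outcome incompatible with that inequality --- is exactly the standard proof one finds in the cited source, and your handling of the outcome bookkeeping is accurate.
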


We can now adapt the following lemma to binary dicot games, being careful that the construction stays in $\ab$, again by only considering binary games.

\begin{lem}
\label{54}
Let $G$ and $H$ be any binary games.
$G$ is $\ab$-downlinked to $H$ if and only if no $G^L \geqslant^-_\ab H$ and no $H^R \leqslant^-_\ab G$.
\end{lem}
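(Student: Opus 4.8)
The plan is to prove the two implications separately; only the backward one will really use binarity and Lemma~\ref{34}, so I treat the forward one directly from the definition of downlinking. Suppose $G$ is $\ab$-downlinked to $H$ by some $T \in \ab$, so that $o^-(G+T) \leqslant \P$ and $o^-(H+T) \geqslant \P$. If some Left option satisfied $G^L \geqslant^-_\ab H$, then since $T \in \ab$ we would get $o^-(G^L + T) \geqslant o^-(H+T) \geqslant \P$; but then Left, moving first in $G+T$, could move to $G^L+T$ and win as the next player, forcing $o^-(G+T) \geqslant \N$, which is incompatible with $o^-(G+T) \leqslant \P$ (no outcome is both $\geqslant \N$ and $\leqslant \P$). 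Symmetrically, if $H^R \leqslant^-_\ab G$, i.e. $G \geqslant^-_\ab H^R$, then $o^-(H^R+T) \leqslant o^-(G+T) \leqslant \P$, so Right moving first in $H+T$ can move to $H^R+T$ and win, forcing $o^-(H+T) \leqslant \N$, again a contradiction. This establishes the forward direction without any extra hypotheses.

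For the backward direction I would build an explicit downlinking game, exploiting that binarity lets $G$ and $H$ have at most one option on each side, so a single witness on each side suffices. Assuming no $G^L \geqslant^-_\ab H$ and no $H^R \leqslant^-_\ab G$ (equivalently $G \not\geqslant^-_\ab H^R$), I apply Lemma~\ref{34}: from $G^L \not\geqslant^-_\ab H$ I obtain $Y \in \ab$ with $o^-(G^L+Y) \leqslant \P$ and $o^-(H+Y) \geqslant \N$, and from $G \not\geqslant^-_\ab H^R$ I obtain $Z \in \ab$ with $o^-(G+Z) \leqslant \N$ and $o^-(H^R+Z) \geqslant \P$. I then set $T=\{Z\mid Y\}$, which is binary dicot. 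To see $o^-(G+T) \leqslant \P$: if Left (moving first) plays $G\to G^L$, Right answers $T\to Y$, reaching $G^L+Y$ where Right wins since $o^-(G^L+Y)\leqslant\P$; if Left plays $T\to Z$, then $o^-(G+Z)\leqslant\N$ lets Right win moving first. The verification of $o^-(H+T)\geqslant\P$ is dual, Left answering $H\to H^R$ by $T\to Z$ (using $o^-(H^R+Z)\geqslant\P$) and answering $T\to Y$ by exploiting $o^-(H+Y)\geqslant\N$.

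The main obstacle is that a needed witness may fail to exist: $Y$ exists only when $G$ has a Left option and $Z$ only when $H$ has a Right option. When $H$ is a Right end I would replace the Left option of $T$ by $B_i$ with $i\geqslant\birth(G)$, and when $G$ is a Left end I would replace the Right option of $T$ by $\overline{B_i}$ with $i\geqslant\birth(H)$; in general
$$T = \big\{\, T^L \,\big|\, T^R \,\big\}, \quad T^L = \begin{cases} B_i & \text{if } H \text{ is a Right end,}\\ Z & \text{otherwise,}\end{cases} \quad T^R = \begin{cases} \overline{B_i} & \text{if } G \text{ is a Left end,}\\ Y & \text{otherwise.}\end{cases}$$
Here I invoke Theorem~\ref{th:GGiR}, giving $o^-(G+B_i)=\R$ (so Right wins after Left plays $T\to B_i$), and its conjugate form $o^-(H+\overline{B_i})=\L$ (so Left wins after Right plays $T\to \overline{B_i}$). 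What makes this coherent is that the strategy only ever calls on the option $Z$ when answering a move to some $H^R$ (so $H$ is not a Right end and $T^L=Z$ is genuinely present), and dually for $Y$. The remaining work, which I expect to be fiddly rather than deep, is to run the short strategy checks of $o^-(G+T)\leqslant\P$ and $o^-(H+T)\geqslant\P$ uniformly over the four combinations of $G$ being or not a Left end and $H$ being or not a Right end, confirming each first move of Left and of Right reduces to one of the four displayed outcome inequalities.
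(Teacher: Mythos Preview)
Your proof is correct and follows the same overall architecture as the paper's: the forward direction is the direct outcome argument, and the backward direction builds an explicit downlinking game $T$ whose single Left and Right options are the Lemma~\ref{34} witnesses, with substitutes inserted when $G$ is a Left end or $H$ is a Right end.

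The one genuine difference is in those substitutes. The paper splits into three cases on each side: it uses the Lemma~\ref{34} witness generically, but when (say) $H$ is a Right end it takes $T^L=(G^R)^o$ if $G$ has a Right option and $T^L=0$ if not, relying on Proposition~\ref{prop:adjointsum} (and, in the doubly-end case, on the mis\`ere end convention). You instead use a single substitute $B_i$ (respectively $\overline{B_i}$) with $i$ large enough, invoking Theorem~\ref{th:GGiR}. This buys you a cleaner two-way case split per side and avoids having to look at $G^R$ or $H^L$ at all in the end cases; the paper's choice stays closer to the adjoint machinery and does not need the birthday bound. Both constructions land in $\ab$ and the strategy verifications are equally short, so the difference is cosmetic rather than conceptual.
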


\begin{proof}
Consider two binary games $G$ and $H$ such that $G$ is $\ab$-downlinked to $H$ by some binary dicot game $T$, i.e. $o^-(G + T) \leqslant \P \leqslant o^-(H + T)$.
Then Left has no winning move from $G+T$, thus $o^-(G^L + T) \leqslant \N$ and similarly $o^-(H^R + T) \geqslant \N$.
Therefore, $T$ witnesses both $G^L \ngeqslant^-_\ab H$ and $G \ngeqslant^-_\ab H^R$.

Conversely, suppose that no $G^L \geqslant^-_\ab H$ and no $H^R  \leqslant^-_\ab G$.
By Lemma~\ref{34}, if $G$ is not a Left end, we can associate to $G^L$ a game $X \in \ab$ such that $o^-(G^L + X) \leqslant \P$ and $o^-(H + X) \geqslant \N$.
Likewise, if $H$ is not a Right end, we associate to $H^R$ a game $Y \in \ab$ such that $o^-(G + Y) \leqslant \N$ and $o^-(H^R + Y) \geqslant \P$.
Let $T$ be the game defined by 
$$
\begin{array}{rcll}

\Lset T & = & \left\{ 

\begin{array}{l}
\{ 0 \} \\
\{ (G^R)^o \} \\
\{ Y \} 
\end{array}
\right.
&

\begin{array}{l}
  \text{ if both $G$ and $H$ are Right ends,} \\
  \text{ if $H$ is a Right end and $G$ is not,} \\
  \text{ otherwise.}
\end{array}  \\

\Rset T & = & \left\{ \begin{array}{l}
\{ 0 \} \\
\{ (H^L)^o \} \\
\{ X \} \\
\end{array}
\right.

&
\begin{array}{l}
 \text{ if both $G$ and $H$ are Left ends,} \\
 \text{ if $G$ is a Left end and $H$ is not,} \\
 \text{ otherwise.}
\end{array} 

 \end{array}
$$

As $T$ has both a Left option and a Right option, and both these options are binary dicot, $T$ is binary dicot.
We claim that $G$ is $\ab$-downlinked to $H$ by $T$.

To show that $o^-(G + T) \leqslant \P$, we just prove that Left loses if she plays first in $G+T$.
Since $T$ has a Left option, $G+T$ is not a Left end. 
If Left moves to $G^L + T$, then by our choice of $X$, Right has a winning response to $G^L + X$.
If Left moves to $G + (G^R)^o$, then Right can respond to $G^R + (G^R)^o$ and win.
If Left moves to $G + Y$, then by our choice of $Y$, $o^-(G + Y) \leqslant \N$ and Right wins a priori.
The only remaining possibility is, when $G$ and $H$ are both Right ends, that Left moves to $G + 0$.
But then Right cannot move and wins.

Now, we show that $o^-(H + T) \geqslant \P$ by proving that Right loses playing first in $H+T$.
If Right moves to $H^R+T$, then Left has a winning response to $H^R + Y$.
If Right moves to $H+(H^L)^o$, then Left wins by playing to $H^L + (H^L)^o$, and if Right moves to $H+X$, then by our choice of $X$, $o^-(H + X) \geqslant \N$ and Left wins a priori.
Finally, the only remaining possibility, when $G$ and $H$ are both Left ends, is that Right moves to $0$.
But then Left cannot answer and wins.
\end{proof}

With this, we can state some converse of Lemma~\ref{lem:1step}, restricted to binary games.

\begin{lem}
\label{lem:binrec}
Let $G$ and $H$ be any binary games.
If $G \geqslant^-_\ab H$, then 
\begin{enumerate}[1.]
\item if $G$ is a Right end, then the mis\`ere outcome of $H$ is either $\N$ or $\R$.
\item for any Right option $G^R$ of $G$, there exists a Right option $H^R$ with $G^R \geqslant^-_\ab H^R$ or a Left option $G^{RL}$ of $G^R$ with $G^{RL} \geqslant^-_\ab H$,
\item if $H$ is a Left end, then the mis\`ere outcome of $G$ is either $\N$ or $\L$.
\item for every Left option $H^L$ of $H$, there exists a Left option $G^L$ of $G$ with $G^L \geqslant^-_\ab H^L$ or a Right option $H^{LR}$ of $H^L$ with $G \geqslant^-_\ab H^{LR}$.
\end{enumerate}
\end{lem}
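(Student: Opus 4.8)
The plan is to recognize that conditions~2 and~4 are exactly the negations of the two downlink obstructions furnished by Lemmas~\ref{53} and~\ref{54}, while conditions~1 and~3 follow from a direct outcome comparison obtained by summing with $0$.

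I would first dispatch the two end conditions. Since $0 \in \ab$, the hypothesis $G \geqslant^-_\ab H$ gives $o^-(G) = o^-(G+0) \geqslant o^-(H+0) = o^-(H)$. If $G$ is a Right end, then Right, moving first, cannot move and hence wins in mis\`ere play, so $o^-(G) \leqslant \N$; therefore $o^-(H) \leqslant o^-(G) \leqslant \N$, which is condition~1. Symmetrically, if $H$ is a Left end, then $o^-(H) \geqslant \N$ because Left wins by being unable to move first, so $o^-(G) \geqslant o^-(H) \geqslant \N$, which is condition~3.

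For conditions~2 and~4 I would invoke Lemma~\ref{53} with $\UU = \ab$: from $G \geqslant^-_\ab H$ it follows that no $G^R$ is $\ab$-downlinked to $H$ and that $G$ is $\ab$-downlinked to no $H^L$. Fix a Right option $G^R$ of $G$ (if none exists, condition~2 is vacuous). Since $G$ is binary, $G^R$ is again binary, so Lemma~\ref{54} applies to the pair $(G^R, H)$: being $\ab$-downlinked to $H$ is equivalent to having no $G^{RL} \geqslant^-_\ab H$ and no $H^R \leqslant^-_\ab G^R$. As $G^R$ is not $\ab$-downlinked to $H$, negating this conjunction yields precisely that some $G^{RL} \geqslant^-_\ab H$ or some $G^R \geqslant^-_\ab H^R$, which is condition~2. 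The same argument applied to the pair $(G, H^L)$ for each Left option $H^L$ of $H$ (vacuous when $H$ is a Left end), using that $H^L$ is binary, delivers condition~4.

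The only point requiring care, which I would flag as the crux, is that Lemma~\ref{54} carries a binary hypothesis on \emph{both} arguments, so I must use that the relevant options $G^R$ and $H^L$ inherit binarity from $G$ and $H$ before applying it to the pairs $(G^R,H)$ and $(G,H^L)$. Once this inheritance is noted, the statement is a direct combination of Lemmas~\ref{53} and~\ref{54} together with the elementary end cases above, with no further computation needed.
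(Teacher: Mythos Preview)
Your proof is correct and follows essentially the same approach as the paper: conditions~1 and~3 are handled by the direct outcome comparison $o^-(G+0)\geqslant o^-(H+0)$, and conditions~2 and~4 are obtained by combining Lemma~\ref{53} with Lemma~\ref{54} applied to the pairs $(G^R,H)$ and $(G,H^L)$, respectively. Your explicit remark that $G^R$ and $H^L$ inherit binarity (so that Lemma~\ref{54} applies) is a welcome clarification the paper leaves implicit.
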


\begin{proof}
1. and 3. are immediate since otherwise we would have $o^-(G+0) \ngeqslant o^-(H+0)$.

Now consider the Right option $G^R$ of $G$ when it exists.
As we have $G \geqslant^-_\ab H$, by Lemma~\ref{53}, $G^R$ is not $\ab$-downlinked to $H$.
Hence by Lemma~\ref{54}, there exists a Left option $G^{RL}$ of $G^R$ such that $G^{RL} \geqslant^-_\ab H$ or a Right option $H^R$ of $H$ such that $G^R \geqslant^-_\ab H^R$, which is exactly 2. \\
The proof of 4. is similar to the proof of 2.
\end{proof}

We can now state Theorem~\ref{th:B=>Db}.

\begin{theo}
\label{th:B=>Db}
Let $G$ and $H$ be any binary game.
We have $G \geqslant^-_\ab H$ if and only if we have $G \geqslant^-_\DD H$.
\end{theo}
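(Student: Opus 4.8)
The plan is to prove the two implications separately. One direction is free: since $\ab \subseteq \DD$, the observation recorded just after the definition of $\geqslant^-_\UU$ immediately gives $G \geqslant^-_\ab H$ whenever $G \geqslant^-_\DD H$. So I only need the converse, namely that $G \geqslant^-_\ab H$ implies $G \geqslant^-_\DD H$ for binary $G$ and $H$.

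For this converse I would follow exactly the strategy-transfer argument used to prove Theorem~\ref{th:B=>D}, proceeding by induction on $G$ and $H$. Given a dicot game $X$ on which Left wins $H + X$ (playing first, resp.\ second), I let Left mimic her winning strategy on $G + X$, and I track the three ways the mimicry can be interrupted: Right moves in the $G$ component from $G$ to some $G^R$, Left's strategy tells her to move in the $H$ component from $H$ to some $H^L$, or the two players exhaust $X$ down to $0$. In the first two cases I invoke Lemma~\ref{lem:binrec} to obtain a matching $\ab$-inequality among followers ($G^{RL} \geqslant^-_\ab H$ or $G^R \geqslant^-_\ab H^R$ in the first case; $G^L \geqslant^-_\ab H^L$ or $G \geqslant^-_\ab H^{LR}$ in the second), and then the induction hypothesis upgrades each of these to the corresponding $\DD$-inequality, so that Left wins the resulting sum a priori. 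In the third case, conditions 1 and 3 of Lemma~\ref{lem:binrec} rule out the problematic end configurations exactly as in the proof of Theorem~\ref{th:B=>D}, and Left wins $G + 0$ directly.

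The essential point---and the reason the statement holds for \emph{all} binary $G$ and $H$ without the restrictions of Theorem~\ref{th:B=>D}---is that here the role played by Corollary~\ref{cor:HI} is taken over by Lemma~\ref{lem:binrec}. Whereas Corollary~\ref{cor:HI} required $G$ to be dicot and $H$ to be binary with no follower of outcome $\L$ (because it was derived from the characterisation of Lemma~\ref{lem:carac}), Lemma~\ref{lem:binrec} is obtained instead from the downlink machinery of Lemmas~\ref{34}, \ref{53} and~\ref{54}, and holds for arbitrary binary games. Since every follower of a binary game is again binary, Lemma~\ref{lem:binrec} is available at every node of the induction, which is precisely what makes the recursion close.

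I expect the only delicate point to be the bookkeeping in the third case, when $X$ has been reduced to $0$: one must check that the end hypotheses of Lemma~\ref{lem:binrec} correctly forbid the situations in which Left could otherwise be stuck (a Right-end $G$ facing an $H$ of outcome $\P$ or $\L$, or a Left-end $H$ with $G$ of outcome $\R$ or $\P$). This is routine given conditions 1 and 3, but it is where the argument would break if the end cases were mishandled, so it deserves explicit verification. Everything else is a faithful transcription of the proof of Theorem~\ref{th:B=>D}, with the $\ab$-inequalities supplied by Lemma~\ref{lem:binrec} and promoted to $\DD$-inequalities by induction.
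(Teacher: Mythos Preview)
Your proposal is correct and follows essentially the same route as the paper: the paper's proof of this theorem is exactly the strategy-transfer induction you describe, with Lemma~\ref{lem:binrec} supplying the recursive $\ab$-inequalities in place of Corollary~\ref{cor:HI}, and the three interruption cases (Right moves in $G$, the strategy calls for a move in $H$, or $X$ is reduced to $0$) handled precisely as you outline. Your observation that binarity is preserved under followers---so that Lemma~\ref{lem:binrec} remains applicable throughout the induction---is indeed the key point distinguishing this argument from that of Theorem~\ref{th:B=>D}.
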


\begin{proof}
As $\ab$ is a subset of $\DD$, we naturally have $G \geqslant^-_\ab H$ whenever we have $G \geqslant^-_\DD H$.

Assume now $G \geqslant^-_\ab H$.
We prove the result by induction on $G$ and $H$.
Let $X$ be a dicot game such that Left has a winning strategy on $H+X$ playing first (respectively second).
On $G+X$, Left can follow the same strategy, until either Right plays on the $G$ component from $G$ to $G^R$ or the strategy recommends that she plays on the $H$ component from $H$ to $H^L$, or the players reduce the $X$ component to $0$.
In the first case, Lemma~\ref{lem:binrec} ensures she can answer in the $G$ component by moving from $G^R$ to $G^{RL}$ with $G^{RL} \geqslant^-_\ab H$ or consider Right moved in the $H$ component from $H$ to $H^R$ with $G^R \geqslant^-_\ab H^R$.
By induction, we have $G^{RL} \geqslant^-_\DD H$ or $G^R \geqslant^-_\DD H^R$, hence Left wins the game a priori.
In the second case, Lemma~\ref{lem:binrec} ensures she can play to $G^L$ with $G^L \geqslant^-_\ab H^L$ or consider Right moved in the $H$ component from $H^L$ to $H^{LR}$ with $G \geqslant^-_\ab H^{LR}$.
By induction, we have $G^L \geqslant^-_\DD H^L$ or $G \geqslant^-_\DD H^{LR}$, hence Left wins the game a priori.
In the third case, if it is Right's turn to play, $H$ has outcome $\P$ or $\L$ so $G$ is not a Right end by Lemma~\ref{lem:binrec} and the same argument as in the first case ensures Left wins.
Assume then it is Left's turn to play.
If $H$ is not a Left end, the same argument as in the second case ensures Left wins.
Otherwise, as the mis\`ere outcome of $G$ is either $\N$ or $\L$ by Lemma~\ref{lem:binrec}, Left wins a priori.
Hence $G \geqslant^-_\DD H$.
\end{proof}

As we announced earlier in this section, Theorem~\ref{th:B=>Db} implies that we cannot reduce a binary dicot game more by considering it modulo binary dicot games rather than modulo all dicot games.
This implies in particular that if the canonical form of a dicot game (modulo $\DD$) is not binary, then this game cannot be equivalent to any binary dicot game modulo $\DD$, as it is easy to verify that the canonical form of a binary dicot game modulo the dicot games (as defined in~\cite{dicotaxo}) is a binary dicot game.
This emphasizes the fact that binary dicot games do not reach all equivalence classes of dicot games modulo $\DD$, as for example there are $1268$ equivalence classes of dicot games with birthday at most $3$ modulo $\DD$\cite{dicotaxo}, and only $26$ binary dicot trees with birthday at most $3$, among them only $13$ are in canonical form.
Nevertheless, the equivalence classes they reach seem to be those that matter more, as the others add nothing when comparing binary games, and in many other cases.

\section{Comparison modulo impartial games}
\label{sec:imp}

In this section, we focus on impartial games and dicot games.

First, we recall some definitions and results about impartial games.

\begin{dfn}[Conway~\cite{onag}]
An option $G'$ of an impartial game $G$ is said to be $\II$-reversible (through $G''$) if $G'' \equiv^-_\II G$ for some option $G''$ of $G'$.
\end{dfn}

\begin{dfn}[Conway~\cite{onag}]
An impartial game $G$ is said to be in impartial canonical form if no follower of $G$ has any $\II$-reversible option.
\end{dfn}

\begin{theo}[Conway~\cite{onag}]
Consider two impartial games $G$ and $H$ in impartial canonical form with $G \equiv^-_\II H$.
Then $G=H$.
\end{theo}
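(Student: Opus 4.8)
The plan is to argue by induction on $\birth(G)+\birth(H)$ that $G$ and $H$ have identical option sets. Since both games are impartial, $\GL=\GR$ and $\HL=\HR$, so it is enough to show that every option of $G$ is recursively equal to an option of $H$, the reverse inclusion being obtained by exchanging the roles of $G$ and $H$ (the relation $\equiv^-_\II$ is symmetric). The base case has $G=H=0$. For the inductive step I fix an option $G'$ of $G$ and aim to produce an option $H'$ of $H$ with $G'\equiv^-_\II H'$: since any follower of a game in canonical form is again in canonical form, $G'$ and $H'$ would then be equivalent canonical games of strictly smaller total birthday, and the induction hypothesis would give $G'=H'$, exhibiting $G'$ as an option of $H$ as desired.

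To find the matching option $H'$, I would feed both halves of the hypothesis $G\equiv^-_\II H$, namely $G\geqslant^-_\II H$ and $H\geqslant^-_\II G$, into Lemma~\ref{53}. From $G\geqslant^-_\II H$ it follows that the option $G'$ is not $\II$-downlinked to $H$, and from $H\geqslant^-_\II G$ that $H$ is not $\II$-downlinked to $G'$. Expanding each non-downlink through the impartial counterpart of Lemma~\ref{54} yields two disjunctions: either some option $H'$ of $H$ is comparable to $G'$ (on one side or the other), or some option $G''$ of $G'$ satisfies $G''\geqslant^-_\II G$ or $G''\leqslant^-_\II G$ (after rewriting $H$ as $G$ using the equivalence). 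If in both disjunctions the comparison lands on the $H$-side, I would extract an option $H'$ of $H$ with $H'\geqslant^-_\II G'$ and an option $H''$ with $H''\leqslant^-_\II G'$, and then argue that these force a single option of $H$ equivalent to $G'$.

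The crux — and the step I expect to be hardest — is ruling out the branches in which the comparison stays inside $G'$, that is, where one obtains an option $G''$ of $G'$ with $G''\geqslant^-_\II G$ (or $G''\leqslant^-_\II G$). Because $\N$ and $\P$ are incomparable outcomes, such a one-sided inequality does not by itself give $G''\equiv^-_\II G$, so it cannot be discarded merely by the non-reversibility of $G'$; resolving it requires combining the information from both non-downlink facts simultaneously and exploiting that $G$ is in canonical form (no follower has a reversible option) to conclude that the only consistent possibility is the existence of an option $H'$ of $H$ with $G'\equiv^-_\II H'$. Once that single equivalent option is secured, the induction hypothesis closes the argument, and running it symmetrically for the options of $H$ gives $G=H$.
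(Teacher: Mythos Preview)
The paper does not prove this theorem; it is quoted from Conway's \emph{On Numbers and Games} and used as a black box. So there is no in-paper proof to compare against, and I can only assess your argument on its own merits.

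Your declared ``crux'' is not the obstacle you think it is. You write that a one-sided inequality $G''\geqslant^-_\II G$ ``does not by itself give $G''\equiv^-_\II G$'', but for impartial games it does: this is precisely Observation~\ref{obs:imp}, since $\II$ is closed under conjugates and impartial games are self-conjugate. Hence any branch of your case analysis that produces $G''\geqslant^-_\II G$ or $G''\leqslant^-_\II G$ immediately yields $G''\equiv^-_\II G$, which makes $G'$ an $\II$-reversible option through $G''$, contradicting that $G$ is in impartial canonical form. So the step you flagged as hardest in fact dissolves in one line.

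The real gap lies earlier, in your appeal to ``the impartial counterpart of Lemma~\ref{54}''. The proof of Lemma~\ref{54} builds the witnessing game $T$ with deliberately different Left and Right option sets (using adjoints $(G^R)^o$, $(H^L)^o$ and the asymmetric games $X,Y$ from Lemma~\ref{34}); such a $T$ is never impartial, so the construction does not transfer to the universe $\II$. To run your argument you would need to show, for impartial $A$ and $B$, that if no option of $A$ is $\II$-equivalent to $B$ and no option of $B$ is $\II$-equivalent to $A$, then some \emph{impartial} $T$ satisfies $o^-(A+T)=o^-(B+T)=\P$. Establishing that is essentially the substantive content of Conway's uniqueness proof, so as written your plan defers the real work to an unproved lemma that the paper neither states nor supplies.
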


\begin{theo}[Siegel~\cite{cgt}]
\label{th:revsimp}
Consider two impartial games $G$ and $H$ such that every option of $H$ is in impartial canonical form, and some option of $H$ is reversible through $G$.
Then
\begin{enumerate}[(i)]
\item every option of $G$ is an option of $H$,
\item every other option of $H$ is reversible through $G$.
\end{enumerate}
\end{theo}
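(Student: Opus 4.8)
The plan is to exploit a feature special to impartial misère play: since impartial games take outcomes only in $\{\N,\P\}$, and these are incomparable in the outcome poset (Figure~\ref{fig:order}), the relation $\geqslant^-_\II$ collapses to $\equiv^-_\II$ on impartial games. Thus \emph{any} inequality $A \geqslant^-_\II B$ between impartial games is in fact an equivalence, and by the uniqueness of impartial canonical forms two equivalent canonical games must be equal. First I would record the consequences of the hypotheses: writing $H'$ for the option of $H$ that is reversible through $G$, we have $G \equiv^-_\II H$ and $G$ is itself in canonical form, because $G$ is a follower of $H'$ and $H'$ is in canonical form. The engine of the proof is the downlinking machinery: Lemma~\ref{53}, together with the analogue of Lemma~\ref{54} for the universe $\II$ (which is available because the adjoint of an impartial game is again impartial, so the witnessing positions built in that lemma stay inside $\II$).

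For part (i), I would take an arbitrary option $G'$ of $G$. From $G \geqslant^-_\II H$ and Lemma~\ref{53}, no option of $G$ is $\II$-downlinked to $H$, so $G'$ is not $\II$-downlinked to $H$. The characterisation then forces one of two alternatives: either some option $G''$ of $G'$ satisfies $G'' \geqslant^-_\II H$, or some option $H''$ of $H$ satisfies $H'' \leqslant^-_\II G'$. In the first alternative the collapse gives $G'' \equiv^-_\II H \equiv^-_\II G$, so $G'$ would be reversible through $G''$, contradicting that $G$ is in canonical form; hence the second alternative holds. There $H'' \equiv^-_\II G'$ with both $H''$ and $G'$ in canonical form, so $H'' = G'$ by uniqueness, exhibiting $G'$ as an option of $H$.

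For part (ii), I would take an arbitrary option $H''$ of $H$. From $H \geqslant^-_\II G$ and Lemma~\ref{53}, $H''$ is not $\II$-downlinked to $G$, so either some option $G'$ of $G$ satisfies $G' \leqslant^-_\II H''$, or some option $W$ of $H''$ satisfies $W \geqslant^-_\II G$. In the first case $G' \equiv^-_\II H''$ with both canonical, so $G' = H''$ and $H''$ is an option of $G$. In the second case $W \equiv^-_\II G \equiv^-_\II H$, and $W$ is in canonical form as a follower of the canonical option $H''$, so $W = G$ by uniqueness; thus $G$ is an option of $H''$, and since $G \equiv^-_\II H$ this says precisely that $H''$ is reversible through $G$. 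Consequently every option of $H$ is either an option of $G$ or reversible through $G$, which is exactly (ii).

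The main obstacle, and the step deserving the most care, is justifying the downlinking characterisation in the universe $\II$ rather than quoting Lemma~\ref{54} verbatim (it is stated there for binary games): one must verify that the witnessing game constructed in that argument, assembled from adjoints of options, remains impartial, which holds because the adjoint of an impartial game is impartial. Once that transfer is secured, the remainder is bookkeeping of the symmetric two-way case analysis, in which every occurrence of $\geqslant^-_\II$ is silently upgraded to $\equiv^-_\II$ and then to genuine equality through the uniqueness of impartial canonical forms.
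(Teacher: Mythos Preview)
The paper does not prove this theorem; it is quoted from Siegel's textbook and used as a black box in the proof of Theorem~\ref{th:I=>D}. So there is no in-paper argument to compare your attempt against.

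On the merits of your argument: the overall architecture (Lemma~\ref{53} plus a downlinking characterisation, followed by the collapse $\geqslant^-_\II\Rightarrow\equiv^-_\II$ and uniqueness of impartial canonical forms) is coherent, and the case analyses you give for (i) and (ii) are correct \emph{provided} the $\II$-analogue of Lemma~\ref{54} holds. The gap is exactly where you locate it, but your one-line justification does not close it. You write that the transfer works ``because the adjoint of an impartial game is impartial''; that is true of adjoints, but the witness $T$ in the proof of Lemma~\ref{54} is not built from adjoints alone. It is assembled from the auxiliary games $X$ and $Y$ supplied by Lemma~\ref{34}, and there the construction is $Z=\{(H^R)^o\mid Y\}$ (and its dual), a game whose single Left option and single Right option are different --- hence not impartial. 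The same asymmetry persists in the final $T$ of Lemma~\ref{54}, whose Left options come from the $Y$'s and whose Right options from the $X$'s. Transported verbatim, the construction leaves $\II$, and nothing you have said repairs this.

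What your argument actually needs is the implication ``not $\II$-downlinked $\Rightarrow$ some option comparison holds'', which is the contrapositive of the \emph{construction} direction of Lemma~\ref{54}. So you cannot avoid building an impartial witness. An $\II$-version of the characterisation is true, but it requires a genuinely symmetric construction (and a verification that the extra options this symmetrisation introduces do not help the wrong player). That is real work, not a remark, and until it is supplied your proofs of (i) and (ii) rest on an unproved lemma.
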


Another useful observation is the following, using the fact that impartial games are their own conjugates.

\begin{obs}
\label{obs:imp}
Let $G$ and $H$ be two impartial games and $\UU$ a set of games closed by conjugates.
If $G \geqslant^-_\UU H$, then $G \equiv^-_\UU H$.
\end{obs}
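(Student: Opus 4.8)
The plan is to exploit the single hypothesis that $G$ and $H$ are impartial, hence equal to their own conjugates, together with the fact that conjugation reverses the outcome ordering. First I would record the two structural facts about conjugation that drive the argument: conjugation distributes over disjunctive sums, so $\overline{G+X} = \overline{G} + \overline{X}$ for every game $X$, and passing to the conjugate reverses the partial order on outcomes, interchanging $\L$ with $\R$ while fixing $\N$ and $\P$. In particular, writing $\overline{o}$ for the outcome obtained from an outcome $o$ by this order-reversing involution, one has $o^-(\overline{W}) = \overline{o^-(W)}$, so that $o^-(G+X) \geqslant o^-(H+X)$ holds if and only if $o^-(\overline{G+X}) \leqslant o^-(\overline{H+X})$.

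Next I would use impartiality: since an impartial game coincides with its conjugate, $\overline{G} = G$ and $\overline{H} = H$. Starting from the hypothesis $G \geqslant^-_\UU H$, which unfolds to $o^-(G+X) \geqslant o^-(H+X)$ for every $X \in \UU$, I would conjugate both sides. Using the two facts above together with $\overline{G}=G$ and $\overline{H}=H$, this yields $o^-(G + \overline{X}) \leqslant o^-(H + \overline{X})$ for every $X \in \UU$.

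Finally, I would invoke the hypothesis that $\UU$ is closed under conjugates: as $X$ ranges over $\UU$, so does $\overline{X}$, so the previous line says exactly that $o^-(G + Y) \leqslant o^-(H + Y)$ for every $Y \in \UU$, that is, $H \geqslant^-_\UU G$. Combined with the assumed $G \geqslant^-_\UU H$, this gives $G \equiv^-_\UU H$.

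The argument is essentially a symmetry/duality observation rather than an inductive computation or a strategy argument, so there is no serious combinatorial obstacle. The only point requiring care — and the step I would state most explicitly — is the behaviour of misère outcomes under conjugation: that $o^-(\overline{W})$ is obtained from $o^-(W)$ by interchanging $\L$ and $\R$ and fixing $\N$ and $\P$, and that this is order-reversing with respect to Figure~\ref{fig:order}. This is also where the closure of $\UU$ under conjugates is indispensable, since without it the reversed inequality would be available only for the conjugates of elements of $\UU$, which need not themselves belong to $\UU$.
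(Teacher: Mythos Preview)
Your argument is correct and is exactly the reasoning the paper intends: the observation is stated without a formal proof, prefaced only by the remark ``using the fact that impartial games are their own conjugates,'' and your write-up simply spells out that hint in full (conjugation distributes over sums, reverses the outcome order, fixes impartial games, and closure of $\UU$ under conjugates lets $\overline{X}$ range over $\UU$). There is nothing to add.
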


We are now in position to state the main result of this section.

\begin{theo}
\label{th:I=>D}
Let $G$ and $H$ be two impartial games such that $G \equiv^-_\II H$.
Then $G\equiv^-_\DD H$.
\end{theo}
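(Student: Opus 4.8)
The plan is to prove the statement by induction on $\birth(G)+\birth(H)$, transferring a winning strategy from $H+X$ to $G+X$ for every dicot game $X$. By Observation~\ref{obs:imp} applied to the conjugate-closed universe $\DD$, for impartial games the single inequality $G\geqslant^-_\DD H$ already upgrades to $G\equiv^-_\DD H$, so at each stage it suffices to establish one inequality; meanwhile the symmetric hypothesis $G\equiv^-_\II H$ supplies both $G\geqslant^-_\II H$ and $H\geqslant^-_\II G$ to feed the strategy argument. To expose the reversibility structure cleanly I would first pass, via Conway's uniqueness theorem, to the common impartial canonical form $K$ of $G$ and $H$ (common precisely because $G\equiv^-_\II H$), and compare each of $G$ and $H$ with $K$, combining by transitivity; Theorem~\ref{th:revsimp} then lets me read off any impartial game as its canonical form together with extra options that are reversible through it.

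The engine of the induction is a per-option dichotomy. Using the impartial reversibility theory (Theorem~\ref{th:revsimp}, together with Observation~\ref{obs:imp}, which converts any $\II$-inequality between impartial games into an $\II$-equivalence), I would show that for each option $G'$ of $G$ at least one of the following holds: either $G'$ is $\II$-equivalent to some option $H'$ of $H$, or $G'$ is $\II$-reversible, i.e. it has an option $G''$ with $G''\equiv^-_\II G\equiv^-_\II H$; and symmetrically for the options of $H$. The crucial quantitative point — the one that makes the whole argument run — is that the pair $(G',H')$ in the first case and the pair $(G'',H)$ in the second case both have strictly smaller total birthday than $(G,H)$: a matched option is shallower, and a reversing target $G''$ is a proper follower of $G$, so $\birth(G'')<\birth(G)$. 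Hence the induction hypothesis applies and promotes each of these $\II$-equivalences to the corresponding $\DD$-equivalence $G'\equiv^-_\DD H'$ or $G''\equiv^-_\DD H$.

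With these $\DD$-equivalences in hand I would run the strategy transfer in the style of the earlier proofs of this paper. Fix a dicot $X$ and a winning strategy for Left on $H+X$; Left plays the corresponding move on $G+X$. A move of Left's strategy inside $X$ is copied verbatim. When Right moves in the $G$-component from $G$ to $G'$, Left either treats it as Right having moved from $H$ to a matched $H'$ with $G'\equiv^-_\DD H'$ and resumes, or, if $G'$ is reversible, she answers $G'\to G''$, landing in $G''+X$, which by $G''\equiv^-_\DD H$ shares the mis\`ere outcome of $H+X$ with the same player to move, so that Right has merely squandered two moves and Left's maintained position is unharmed. Moves dictated in the $H$-component by Left's strategy are handled symmetrically, and one finishes by checking the terminal case, when $X$ has been reduced to $0$, against the outcomes of $G$ and $H$, which agree since $G\equiv^-_\II H$.

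I expect the \emph{main obstacle} to be exactly this strategy bookkeeping: keeping the turn parity consistent across the reversing detours (verifying that ``$G''+X$ with the same player to move'' is genuinely winning for Left, which is where $G''\equiv^-_\DD H$ is used to equate the mis\`ere outcomes of all dicot sums), and ensuring Left never needs an option of $G$ lacking a legitimate counterpart. A secondary delicate point is to legitimise the passage to canonical form without an illegal substitution of $\DD$-equivalent games into option sets; organising the comparison as a \emph{direct} strategy transfer between a game and its canonical form $K$, and invoking Theorem~\ref{th:revsimp} only to read off the structure ``the options of $K$, plus further options reversible through $K$'', sidesteps this, which is why I would phrase the induction around the dichotomy above rather than by rewriting options one at a time.
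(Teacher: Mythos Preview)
Your proposal is correct and follows essentially the same route as the paper: reduce via impartial canonical forms, invoke Theorem~\ref{th:revsimp} to expose the reversibility structure, and then run a strategy transfer on $G+X$ versus $H+X$ for dicot $X$, with Observation~\ref{obs:imp} upgrading a one-sided inequality to equivalence. The paper streamlines one layer away: rather than working with a dichotomy up to $\II$-equivalence and then promoting those equivalences to $\DD$-equivalences by a nested induction, it first assumes (by induction) that $G$ itself and all options of $H$ are already in impartial canonical form, so Theorem~\ref{th:revsimp} yields \emph{literal} containments --- every option of $G$ is an option of $H$, and every extra option of $H$ has $G$ itself as an option --- making the strategy transfer immediate with no matching-up-to-equivalence bookkeeping.
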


\begin{proof}
By induction, we can consider that $G$ and all options of $H$ are in impartial canonical form.
If $H$ is in impartial canonical form, then we have $G=H$, and so $G \equiv^-_\DD H$.
Hence we can assume $H$ is not in impartial canonical form.
This means there is a reversible option $H'$ of $H$ through an option $H''$ (of $H'$) with $H \equiv^-_\II H''$.
As $H'$ is in impartial canonical form, so is $H''$, and $H''=G$.
By Theorem~\ref{th:revsimp}, every option of $G$ is an option of $H$, and every option of $H$ that is not an option of $G$ has $G$ as one of its options.

Now consider a dicot game $X$ such that Left wins $G+X$ playing first (respectively second).
In $H+X$, she can follow the same strategy until either Right plays in $H$, or her strategy recommends a move in $G$, or there is no more any move available in the $X$ component.
In the first case, either he moved the $H$ component to a position $H'$ that is an option of $G$, and she can assume he played that move and resume her strategy, or $G$ is an option of $H'$, so Left can just move the $H$ component to $G$ and resume her strategy.
In the second case, her strategy recommends her to move the $G$ component to some $G'$ that is also an option of $H$, so she can move the $H$ component to $G'$ and resume her strategy.
In the third case, as $G$ and $H$ are $\II$-equivalent, they have the same outcome, hence as Left was winning $G$, she wins $H$ a priori.
Therefore, we have $H \geqslant^-_\DD G$, and so $H \equiv^-_\DD G$ by Observation~\ref{obs:imp}.
\end{proof}

Note that the converse is obviously true since $\II \subset \DD$.

Unfortunately, it is quite unlikely that we can extend this result much more, as we now give several counterexamples to some `extensions' that would have been natural to consider.

The first potential extension we considered is: Do we have $G \geqslant^-_\II H \Rightarrow G \geqslant^-_\DD H$ whenever $G$ is dicot and $H$ is impartial?
Unfortunately, even reducing $H$ to only be $0$ is not enough if we want $G$ to be able to range over all dicot games.

\begin{prop}
\begin{enumerate}[a)]
\item $I \geqslant^-_\II 0$,
\item $I \ngeqslant^-_\DD 0$.
\end{enumerate}
\end{prop}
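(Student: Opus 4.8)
The plan is to treat the two parts separately, obtaining each as a short consequence of results already in hand. For part a) I would invoke Theorem~\ref{th:IL} directly. Since $0$ is itself an impartial game, that theorem already gives $o^-(I) = o^-(I+0) = \L$, and more to the point $o^-(I+X) = \L$ for \emph{every} impartial $X$. Because $\L$ is the greatest outcome in the partial order of Figure~\ref{fig:order}, we automatically have $o^-(I+X) = \L \geqslant o^-(0+X)$ for all impartial $X$, which is exactly the definition of $I \geqslant^-_\II 0$. Nothing beyond Theorem~\ref{th:IL} is needed.

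For part b) the plan is to produce a single dicot game that breaks the inequality, and the natural candidate is the adjoint $I^o$. The argument I would run is: first, $I^o$ is a dicot game by the remark of Dorbec et al., so it is a legitimate test game for $\DD$; second, Proposition~\ref{prop:adjointsum} gives $o^-(I+I^o) = \P$; third, since $I$ is binary and $o^-(I) = \L$ (again by Theorem~\ref{th:IL} applied with $X=0$), Lemma~\ref{lem:adjout} forces $o^-(I^o) = \L$, hence $o^-(0+I^o) = \L$. In the outcome lattice one has $\P < \L$, so $o^-(I+I^o) = \P \not\geqslant \L = o^-(0+I^o)$, and therefore $I \ngeqslant^-_\DD 0$, with the dicot game $I^o$ serving as the explicit witness.

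The two ingredients that make part b) go through are that $I^o$ inherits the outcome $\L$ of $I$, which is precisely where binariness of $I$ is used so that Lemma~\ref{lem:adjout} applies, while $I+I^o$ is simultaneously pinned down to $\P$ by the adjoint identity. I do not expect a genuine obstacle here: the only points requiring care are checking that $I$ is binary (already recorded in the text) and confirming the direction of the comparison, namely that $\P$ and $\L$ are comparable with $\P < \L$ so that indeed $\P \not\geqslant \L$. The real work was front-loaded into choosing the adjoint as the test game, after which Proposition~\ref{prop:adjointsum} and Lemma~\ref{lem:adjout} do everything. A fully self-contained alternative would expand $I^o = \big\{\{*|\{*|*\}\}\big|\{*|*\}\big\}$ and compute $o^-(I^o)$ and $o^-(I+I^o)$ by hand, but this is strictly more laborious and yields no extra insight, so I would keep the adjoint-based argument.
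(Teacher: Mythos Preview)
Your proposal is correct and follows essentially the same approach as the paper: both parts use Theorem~\ref{th:IL} for a) and the adjoint $I^o$ as the distinguishing dicot game for b), citing Proposition~\ref{prop:adjointsum} for $o^-(I+I^o)=\P$ and the fact that $o^-(I^o)=\L$. Your argument is slightly more detailed in that you justify $o^-(I^o)=\L$ via Lemma~\ref{lem:adjout} (using that $I$ is binary), whereas the paper simply asserts this outcome.
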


\begin{proof}
Theorem~\ref{th:IL} tells us that whichever impartial game you add to $I$, the resulting game has outcome $\L$, so modulo impartial games, $I$ is greater than or equal to any game.
Hence we have $I \geqslant^-_\II 0$.

To see $b)$, one needs only see that $o^-(I+I^o) = \P$ while $o^-(0+I^o) = \L$.
Hence $I \ngeqslant^-_\DD 0$.
\end{proof}

The second potential extension we considered is: Do we have $G \equiv^-_\II H \Rightarrow G \equiv^-_\DD H$ whenever $G$ and $H$ are dicot?
Unfortunately, we again found counterexamples.

\begin{prop}
\begin{enumerate}[a)]
\item $I \equiv^-_\II \{I|I\}$,
\item $I \not\equiv^-_\DD \{I|I\}$.
\end{enumerate}
\end{prop}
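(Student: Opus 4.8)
The plan is to prove part a) by showing directly that both $I$ and $\{I|I\}$ turn every impartial sum into an $\L$-position, and to prove part b) by exhibiting a single dicot witness, namely $I^o$, on which the two games disagree. Note that we cannot shortcut part a) through Observation~\ref{obs:imp}, since neither $I$ nor $\{I|I\}$ is impartial: the option $I = \{*|\{*|0\}\}$ has distinct Left and Right options and so is not impartial.

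For part a), Theorem~\ref{th:IL} already gives $o^-(I+X)=\L$ for every impartial game $X$, so it suffices to establish $o^-(\{I|I\}+X)=\L$ for every impartial $X$, and equivalence modulo $\II$ follows immediately since then $o^-(I+X)=o^-(\{I|I\}+X)$ for all such $X$. I would give Left the strategy of collapsing the $\{I|I\}$ component to $I$ at the first opportunity. If Left moves first she plays $\{I|I\}\to I$, reaching $I+X$ with Right to move, an $\L$-position by Theorem~\ref{th:IL}. If Right moves first, either he plays $\{I|I\}\to I$, reaching $I+X$ with Left to move (again $\L$), or he plays in $X$, reaching $\{I|I\}+X'$ with $X'$ impartial, whereupon Left replies $\{I|I\}\to I$ to reach $I+X'$ with Right to move, which is $\L$ by Theorem~\ref{th:IL}.

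For part b), I would take $X=I^o$, which lies in $\DD$ since the adjoint of any game is dicot. By Proposition~\ref{prop:adjointsum} we have $o^-(I+I^o)=\P$. On the other hand, in $\{I|I\}+I^o$ the first player, be it Left or Right, can move the $\{I|I\}$ component to $I$, reaching the $\P$-position $I+I^o$; moving onto a $\P$-position leaves the opponent to move in a position they lose, so it is a winning move for whoever plays it. Hence the first player wins regardless of side, i.e. $o^-(\{I|I\}+I^o)=\N$. Since $\N\neq\P$ and $I^o\in\DD$, this single witness shows $I\not\equiv^-_\DD\{I|I\}$.

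The main (and fairly modest) obstacle is the bookkeeping in the Right-first case of part a): one must check that after Right plays in $X$ the disturbed component $X'$ remains impartial, so that Theorem~\ref{th:IL} continues to apply, and that the degenerate case $X=0$ is covered (there Right's only move is in $\{I|I\}$, which is handled). Part b) is essentially immediate once one notices the ``move-to-$I$'' trick; in fact the same computation shows $G\not\equiv^-_\DD\{G|G\}$ for \emph{any} game $G$, witnessed by $G^o$. The real content of the proposition is therefore that this $\DD$-distinction persists even though $I$ and $\{I|I\}$ are completely indistinguishable modulo $\II$.
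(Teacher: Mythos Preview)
Your proposal is correct and follows the same approach as the paper: both parts use the same witnesses (Theorem~\ref{th:IL} for part a), and $I^o$ via Proposition~\ref{prop:adjointsum} for part b)). The only cosmetic difference is that in part a) the paper handles Right's move in $X$ by induction on $X$, whereas you have Left immediately collapse $\{I|I\}\to I$ after Right's move; both arguments are valid and essentially interchangeable.
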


\begin{proof}
By Theorem~\ref{th:IL}, we know that for any impartial game $X$, we have $o^-(I+X) = \L$.
Hence, to prove $a)$, we only need to prove the same for $\{I|I\}$, which we do by induction.
Let $X$ be an impartial game.
From $\{I|I\} + X$, Left can move to $I+X$, which is a mis\`ere $\L$-position.
From $\{I|I\} + X$, Right can either move to $I+X$, a mis\`ere $\L$-position, or to some $\{I|I\} + X'$, which is also a mis\`ere $\L$-position by induction.
Hence $\{I|I\} + X$ is a mis\`ere $\L$-position.

To see $b)$, one need only see that $o^-(I+I^o) = \P$, by Proposition~\ref{prop:adjointsum}, while $o^-(\{I|I\}+I^o) = \N$, both player having a move to $I+I^o$.
Hence $I \not\equiv^-_\DD \{I|I\}$.
\end{proof}

Another potential extension, for which we have no answer yet, would be the following.

\begin{question}
Do we have $G \equiv^-_\II H \Rightarrow G \equiv^-_\DD H$ whenever $G$ is dicot and $H$ is impartial?
\end{question}


The last potential extension we considered was to find a bigger set of games $\UU$ such that $G \equiv^-_\II H \Rightarrow G \equiv^-_\UU H$ whenever $G$ and $H$ are both impartial.
Unfortunately, as Allen pointed out~\cite{mpg}, any universe $\UU$ containing $1 = \{0|\cdot\}$ or $\overline{1} = \{\cdot|0\}$ verifies $*+* \not\equiv^-_\UU 0$.
As $1$ and $\overline{1}$ are the simplest non-dicot position, this could make one think that a set having the required property and strictly containing all dicot positions would not be closed under addition and followers.
However, we here give an example of a universe satisfying these conditions.

First, we prove the following property.

\begin{lem}
\label{lem:IL}
Let $X$ be an impartial game and $n$ a positive integer.
We have $o^-(X+n\{\cdot|I\}) = \L$.
\end{lem}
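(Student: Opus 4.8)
The plan is to prove a single strengthened statement by induction, from which Lemma~\ref{lem:IL} falls out as a special case. Set $E = \{\cdot|I\}$, and record the shape of the relevant positions: $E$ is a Left end whose only (Right) option is $I$; the game $I = \{*|\{*|0\}\}$ has $*$ as its only Left option and $J := \{*|0\}$ as its only Right option; and $J$ has $*$ as Left option and $0$ as Right option. Since $*$, $0$, and sums of impartial games are impartial, the only \emph{non-impartial} components that can ever appear while playing from $X + n\{\cdot|I\}$ are copies of $E$, copies of $I$, and transient copies of $J$. I would therefore prove the following strengthening, call it $S(a,b)$: for every impartial game $X$ and all integers $a,b \geqslant 0$ with $a+b \geqslant 1$, we have $o^-(X + aI + bE) = \L$. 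Lemma~\ref{lem:IL} is exactly $S(0,n)$ for $n \geqslant 1$.

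I would prove $S(a,b)$ by induction on $a+b$, with an inner induction on $\birth(X)$. There are two base cases, both at $a+b=1$. The case $(a,b)=(1,0)$, namely $o^-(X+I)=\L$, is precisely Theorem~\ref{th:IL}. The case $(a,b)=(0,1)$, namely $o^-(X+E)=\L$, I would settle by the inner induction on $\birth(X)$: moving first, Left is unable to move when $X=0$ (and so wins by the misère convention) and otherwise moves in $X$ to some $X'+E$, an $\L$-position by induction; moving second, Left answers a Right move inside $X$ using the inner induction, and answers the activation $E\to I$ by noting the resulting $X+I$ is an $\L$-position by Theorem~\ref{th:IL}.

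For the inductive step ($a+b \geqslant 2$) I would exhibit Left's winning responses. Moving first, if $a \geqslant 1$ she plays some $I\to *$, reaching $(X+*)+(a-1)I+bE$, which has $a+b-1 \geqslant 1$ non-impartial summands and is thus an $\L$-position by the outer induction; if $a=0$ she moves in $X$ (or is stuck when $X=0$) exactly as in the base case. Moving second, a Right move inside $X$ is answered by the inner induction; a Right move $I\to J$ is answered at once by $J\to *$, again reaching an impartial game plus $(a-1)I+bE$ with $a+b-1 \geqslant 1$ summands, hence an $\L$-position; and an activation $E\to I$ leads to $X+(a+1)I+(b-1)E$, where, with Left now to move, she plays some $I\to *$ to reach $(X+*)+aI+(b-1)E$, which has $a+b-1 \geqslant 1$ summands and is an $\L$-position by the outer induction. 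In every case Left wins, so $o^-(X+aI+bE)=\L$.

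The step I expect to be the main obstacle is keeping the invariant tight enough. The naive guess that every sum of an impartial game with these building blocks is an $\L$-position is simply false: for instance $o^-(*+J)=\N$, so a lone $J$ can be fatal. The induction avoids this precisely because $J$ never occurs in a position of the form asserted by $S(a,b)$; it arises only transiently from a Right move $I\to J$ and is neutralised immediately by $J\to *$, which converts it into the harmless impartial summand $*$. The remaining subtlety is the activation move $E\to I$, which does not decrease $a+b$; the argument works because after this move it is Left's turn, and she can spend any $I$ via $I\to *$, strictly decreasing $a+b$ and landing in a position covered by the outer induction hypothesis.
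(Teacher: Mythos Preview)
Your proof is correct, and the core mechanism matches the paper's: after any activation $E\to I$, Left immediately plays $I\to *$, absorbing the $*$ into the impartial part and reducing the number of non-impartial summands. The paper, however, avoids your strengthened invariant $S(a,b)$ entirely. It inducts directly on $n$ and on $X$: if Right activates some $E$ to $I$, then either $n=1$ and we are in $X+I$, handled by Theorem~\ref{th:IL}, or $n\geqslant 2$ and Left's immediate reply $I\to *$ lands us back in a position of the original shape $(X+*)+(n-1)\{\cdot|I\}$, so the plain induction hypothesis applies. Because every $I$ is neutralised the move after it appears, there is never a need to track $a$ copies of $I$ alongside $b$ copies of $E$; your two-parameter induction works but is more bookkeeping than the lemma requires. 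What your formulation buys is a slightly stronger statement (all of $S(a,b)$) for free, at the cost of a heavier induction; the paper's version is leaner but proves only the lemma as stated.
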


\begin{proof}
We prove the result by induction on $n$ and $X$.
Assume first Left starts playing in $X + n\{\cdot|I\}$.
If $X$ is not $0$, then Left can play in the $X$ component and leave a mis\`ere $\L$-position by induction.
Otherwise, she cannot play at all and wins immediately.

Assume now Right starts playing in $X + n\{\cdot|I\}$.
If he plays in the $X$ component, he leaves a mis\`ere $\L$-position by induction.
Otherwise, he moves to $X + (n-1)\{\cdot|I\} + I$.
If $n=1$, this is a mis\`ere $\L$-position by Theorem~\ref{th:IL}.
Otherwise, Left can answer to $X + (n-1)\{\cdot|I\} + *$, and as $X+*$ is an impartial game, leave a mis\`ere $\L$-position.

Hence $X + n\{\cdot|I\}$ is a mis\`ere $\L$-position.
\end{proof}

The universe we consider is $\DDI = c\ell(\DD \cup \big\{\{\cdot|I\},\{\overline{I}|\cdot\}\big\})$.
It is closed under addition and followers as it is the closure of a set, and it is closed by conjugates as $\DD$ is closed under conjugates and $\{\cdot|I\}$ and $\{\overline{I}|\cdot\}$ are each other's conjugates.
As $I$ and $\overline{I}$ are dicot games, the only non-dicot games in $\DDI$ are sums of games including $\{\cdot|I\}$ or $\{\overline{I}|\cdot\}$.

We now extend Theorem~\ref{th:I=>D} to comparison modulo $\DDI$.

\begin{theo}
Let $G$ and $H$ be two impartial games such that $G \equiv^-_\II H$.
Then $G\equiv^-_\DDI H$.
\end{theo}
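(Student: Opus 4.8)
The plan is to mimic the structure of the proof of Theorem~\ref{th:I=>D}, upgrading the strategy-stealing argument from the universe $\DD$ to the larger universe $\DDI$. As before, I would argue by induction so that I may assume $G$ and all options of $H$ are in impartial canonical form. If $H$ is already in canonical form then $G=H$ and there is nothing to prove, so I assume $H$ has a reversible option $H'$ through some $H''$ with $H'' \equiv^-_\II H$; by Theorem~\ref{th:revsimp}, $H''=G$, every option of $G$ is an option of $H$, and every option of $H$ that is not an option of $G$ has $G$ as an option. These structural facts are exactly what powered the previous proof, and they remain available unchanged since they concern only $\II$-canonical forms.

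The heart of the matter is the strategy-stealing step. I would take a game $X \in \DDI$ on which Left wins $G+X$ (playing first, respectively second) and show she wins $H+X$ with the same outcome. Since $X \in \DDI$, I would write $X = D + m\{\cdot|I\} + n\{\overline{I}|\cdot\}$ where $D$ is dicot, using the observation in the excerpt that the only non-dicot games in $\DDI$ are sums involving the two generators $\{\cdot|I\}$ and $\{\overline{I}|\cdot\}$. Left follows her $G+X$ strategy on $H+X$, translating any play in the $H$ component via the correspondence between options of $G$ and options of $H$ exactly as in Theorem~\ref{th:I=>D}: when Right plays in $H$ he either lands on an option shared with $G$, or on an option having $G$ as its own option (allowing Left to move straight back to $G$), and when her stolen strategy recommends a $G$-move she finds a matching $H$-option. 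The three cases (Right plays in $H$, the strategy plays in $G$, the components other than $H,G$ get exhausted) are handled as before.

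The new difficulty, and the step I expect to be the main obstacle, is the terminal case where no move remains except in the impartial-generator summands $m\{\cdot|I\}+n\{\overline{I}|\cdot\}$. In the $\DD$-proof this case simply invoked that $G$ and $H$ have equal outcomes; here I must instead argue that adding these extra summands cannot break the equivalence. This is precisely where Lemma~\ref{lem:IL} enters: it guarantees that for any impartial game $Y$ and positive $n$ we have $o^-(Y+n\{\cdot|I\})=\L$, and dually $o^-(Y+n\{\overline{I}|\cdot\})=\R$. So once the dicot part $D$ has been reduced to $0$ and it is, say, Right's turn, the residual position is $H + m\{\cdot|I\} + n\{\overline{I}|\cdot\}$ with $H$ impartial; if the surviving generators are all of the $\{\cdot|I\}$ type the position is an $\L$-position by Lemma~\ref{lem:IL}, and symmetrically for the $\{\overline{I}|\cdot\}$ type, while mixed sums must be handled by observing that a move into an $I$ (or $\overline{I}$) can be mirrored, using $I+\overline{I}$ being a $\P$-position, back into the impartial world where $G\equiv^-_\II H$ applies.

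Having shown Left wins $H+X$ whenever she wins $G+X$, I obtain $H \geqslant^-_\DDI G$, and since $\DDI$ is closed under conjugates and $G,H$ are impartial, Observation~\ref{obs:imp} upgrades this to $G \equiv^-_\DDI H$. The symmetry between the two generators under conjugation is what makes the $\overline{I}$-side of every argument automatic, so the real work is confined to setting up the terminal-case bookkeeping correctly; the rest transfers verbatim from Theorem~\ref{th:I=>D}.
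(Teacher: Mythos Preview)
Your overall scaffolding---induction so that $G$ and all options of $H$ are in impartial canonical form, the structural consequences of Theorem~\ref{th:revsimp}, the strategy-stealing on $H+X$ guided by Left's strategy on $G+X$, and the appeal to Observation~\ref{obs:imp} at the end---matches the paper exactly. The gap is in your terminal-case analysis.

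The strategy-stealing breaks down precisely when it is Left's turn and she has no move available in the (current) $X$-component; at that moment the $X$-component is a \emph{Left end} of $\DDI$. Now observe that a Left end of $\DDI$ must be of the form $n\{\cdot|I\}$: writing a position as $D+m\{\cdot|I\}+n\{\overline{I}|\cdot\}$ with $D$ dicot, any surviving $\{\overline{I}|\cdot\}$ summand gives Left the move to $\overline{I}$, and a dicot Left end is $0$. So there are simply no $\{\overline{I}|\cdot\}$ summands left when the argument reaches its terminal case, and Lemma~\ref{lem:IL} (together with the $n=0$ outcome comparison) finishes immediately. This is exactly what the paper does.

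Your proposal instead declares the terminal case to be ``the dicot part $D$ has been reduced to $0$,'' which is \emph{not} where the copying argument breaks, and then tries to treat residual positions of the form $H+m\{\cdot|I\}+n\{\overline{I}|\cdot\}$ with $n>0$. Two things go wrong. First, your handling of the pure $\{\overline{I}|\cdot\}$ case by ``symmetry'' is incorrect: the conjugate of Lemma~\ref{lem:IL} says $o^-(Y+n\{\overline{I}|\cdot\})=\R$, which is a loss for Left, not a win. Second, the mixed-sum argument (``mirror via $I+\overline{I}$ being a $\P$-position'') is neither established in the paper nor needed, and as stated it is not a proof. Once you correct the trigger for the terminal case to ``Left has no move in the $X$-component,'' all of this extra machinery evaporates and your argument becomes the paper's.
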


\begin{proof}
For the same reason as in the proof of Theorem~\ref{th:I=>D}, we can consider every option of $G$ is an option of $H$, and every option of $H$ that is not an option of $G$ has $G$ as one of its options.

Now consider a game $X \in \DDI$ such that Left wins $G+X$ playing first (respectively second).
In $H+X$, she can follow the same strategy until either Right plays in $H$, or her strategy recommends a move in $G$, or she has no more moves available in the $X$ component.
In the first case, either he moved the $H$ component to a position $H'$ that is an option of $G$, and she can assume he played that move and resume her strategy, or $G$ is an option of $H'$, so Left can just move the $H$ component to $G$ and resume her strategy.
In the second case, her strategy recommends her to move the $G$ component to some $G'$ that is also an option of $H$, so she can move the $H$ component to $G'$ and resume her strategy.
In the third case, as she has no move in $X$, we have $X = n\{\cdot|I\}$ for some natural integer $n$.
If $n$ is positive, then the position is a mis\`ere $\L$-position by Lemma~\ref{lem:IL}.
If $n=0$, as $G$ and $H$ are $\II$-equivalent, they have the same outcome, hence as Left was winning $G$, she wins $H$ a priori.
Therefore, we have $H \geqslant^-_\DDI G$, and so $H \equiv^-_\DDI G$ by Observation~\ref{obs:imp}.
\end{proof}

Though this universe is somewhat artificial, it is interesting to see that there is still some hope in finding universes bigger than the universe of dicot games, perhaps some not so artificial, sharing this property.


\delete{
This raises the following question.

\begin{question}
Let $\UU$ and $\UU'$ be two sets of games such that $\UU \subset \UU'$ and all ends of $\UU'$ are in $\UU$.
Do we have $G \equiv^-_\UU H \Rightarrow G \equiv^-_\UU H$ whenever $G$ and $H$ are in $\UU$?
\end{question}
}

\end{sloppypar}

\end{document}